\newcommand{\be}{\begin{equation}}
\newcommand{\ee}{\end{equation}}
\newcommand{\bes}{\begin{equation*}}
\newcommand{\ees}{\end{equation*}}
\newcommand{\ba}{\begin{align}}
\newcommand{\ea}{\end{align}}
\newcommand{\cB}{{\mathcal B}}
\newcommand{\cF}{{\mathcal F}}
\newcommand{\cL}{{\mathcal L}}
\newcommand{\cM}{{\mathcal M}}
\newcommand{\cP}{{\mathcal P}}
\newcommand{\cR}{{\mathcal R}}
\newcommand{\cU}{{\mathcal U}}
\newcommand{\N}{\mathbb{N}}
\newcommand{\R}{\mathbb{R}}
\newcommand{\dist}{\mathop{\rm dist}}
\newcommand{\di}{\mathop{\rm div}\nolimits}
\newtheorem{theorem}{Theorem}
\newtheorem{remark}{Remark}
\newtheorem{definition}{Definition}
\journal{}
\begin{document}


\begin{frontmatter}
\title{Nonlinear approximation of high-dimensional anisotropic analytic functions}
\author[1]{Diane Guignard\corref{cor1}%
\fnref{fn1}}
\ead{dguignar@uottawa.ca}
\author[2]{Peter Jantsch\fnref{fn1}}
\ead{peter.jantsch@wheaton.edu}
\affiliation[1]{organization={University of Ottawa, Department of Mathematics and Statistics},
addressline={150 Louis-Pasteur Pvt},
city={Ottawa},
state={ON},
country={Canada},
postcode={K1N 6N5}}
\affiliation[2]{organization={Wheaton College, Department of Mathematics and Computer Science},
addressline={501 College Ave},
city={Wheaton},
state={IL},
postcode={60187},
country={USA}}
\cortext[cor1]{Corresponding author}
\fntext[fn1]{DG acknowledges the support provided by the Natural Science and Engineering Research Council (NSERC, grant RGPIN-2021-04311). PJ was partially supported by NSF Fellowship DMS-1704121.}

\date{\today}

\begin{abstract}
Motivated by nonlinear approximation results for classes of parametric partial differential equations (PDEs), we seek to better understand so-called library approximations to analytic functions of countably infinite number of variables. Rather than approximating a function of interest in a single space, a library approximation uses a collection of spaces and the best space may be chosen for any point in the domain. In the setting of this paper, we use a specific library which consists of local Taylor approximations on sufficiently small rectangular subdomains of the (rescaled) parameter domain $Y:=[-1,1]^\N$.
When the function of interest is the solution of a certain type of parametric PDE, recent results~\cite{bonito2020nonlinear} prove an upper bound on the number of spaces required to achieve a desired target accuracy. In this work, we prove a similar result for a more general class of functions with anisotropic analyticity, namely the class introduced in \cite{BDGJP2020}. In this way we show both where the theory developed in \cite{bonito2020nonlinear} depends on being in the setting of parametric PDEs with affine diffusion coefficients, and also expand the previous result to include more general types of parametric PDEs.
\end{abstract}

\begin{keyword}
	approximation of high-dimensional functions, anisotropic analyticity, piecewise polynomials, nonlinear reduced model


\end{keyword}

\end{frontmatter}   

\section{Introduction} 

Polynomial and piecewise polynomial approximations are fundamental tools in numerical analysis, forming the basis for many widely used methods such as the finite element method. More recently, results on polynomial and piecewise polynomial approximation have become important for understanding model reduction techniques for parametrized partial differential equations (PDEs), which are a typical model for describing complex systems in the field of uncertainty quantification or optimization. In this work, we consider these problems through the more general framework of approximating a function 
\begin{equation*}
y \in Y \mapsto u(y) \in X,
\end{equation*}
where $X$ is a Banach space, and $Y\subset \R^d$ is the parameter domain with $d$ large or even countably infinite. To prove results which are immune to the dimension $d$, we assume in what follows that the parameters are countably infinite, namely $y=(y_1,y_2,\ldots)$, and that they have been rescaled so that $Y=[-1,1]^\N$. The finite dimensional case $y=(y_1,\dots,y_d)$ with $d<\infty$ can always be recast in this setting by considering that $u(y)$ does not depend on the variables $y_j$ for $j>d$.

Because of the dimensionality, it is often crucial to perform a model reduction (dimension reduction) for $u$. A typical model reduction method is based on introducing a {\em linear space} $X_n$, of low dimension $n$, which is tailored to provide an accurate approximation to all $u(y)$ as $y$ varies in $Y$, or equivalently, to
\be
\cM:=\{u(y) \in X:\ y\in Y\}.
\ee 
One possibility for obtaining such dimension reduction is to approximate $u$ by Banach space valued polynomials in $y$. Another common approach is to define $X_n$ to be the linear span of so-called snapshots $u(y^1), \ldots, u(y^n)\in X$ for suitably selected parameters $y^1,\ldots y^n\in Y$ and, given $y\in Y$, approximate $u(y)$ by its orthogonal projection onto $X_n$.

Recent results from~\cite{ bonito2020nonlinear, eftang2010, maday2013, zou2019adaptive} have also drawn attention to the use of {\em nonlinear} model reduction methods, which have several advantages over linear methods. First, the ability to use approximation spaces of small dimension enables one to avoid the computationally expensive process of projecting a function into a space of large dimension, which must be done online whenever the reduced model is utilized. Another advantage is in the problem of state and parameter estimation, which is one of the main motivations for the introduction of nonlinear reduced models in \cite{bonito2020nonlinear}. In this framework, there is limited information available about the solution manifold, usually in the form of linear measurements of the state, and increasing the dimension of the reduced space beyond that of the space characterizing the measurements is unhelpful; see for instance \cite{BCDDPW17}.

Although we are motivated by parametric PDE applications, we formulate and study this subject in a more abstract form as a problem in multivariate approximation. By deriving results for a general class of functions, namely the class of anisotropic analytic functions introduced in \cite{BDGJP2020} (see Definition \ref{def:Brp} below), we hope to draw attention to two points. First, we show where the previous theory from~\cite{bonito2020nonlinear} relies on being in the setting of an elliptic parametric PDE with affine diffusion coefficient. Furthermore, by expanding the result to a general class of functions, we may then apply it to some other types of parametric PDEs as soon as the corresponding solution satisfies appropriate analyticity assumptions. 

\subsection{Linear reduced models} \label{subsec:linear}
As mentioned above, there are two general approaches to finding a linear reduced model $X_n$. When the map $y\mapsto u(y)$ has a certain analyticity in $y$, the Taylor polynomial approach\footnote{Depending on the context, other polynomial representations could be used such as expansions with respect to Legendre, Chebyshev or Hermite polynomials.} makes use of the series representation

\be
\label{ps}
u(y)=\sum_{\nu\in \cF} t_\nu y^\nu, \quad t_\nu\in X.
\ee 
Here, $\cF $ denotes the set of finitely-supported sequences $\nu=(\nu_1,\nu_2,\dots)$, i.e., sequences with finitely many non-zero terms, and whose entries are nonnegative integers, and $y^{\nu}=\prod_{j\ge 1}y_j^{\nu_j}$. Under certain analyticity assumptions, quantitative bounds for the norms of the Taylor coefficients, $\|t_{\nu}\|_X$, allow one to prove that for any $\varepsilon$, there is a finite set $\Lambda=\Lambda(\varepsilon)\subset \cF$ such that 
\be
\label{eapprox}
\sup_{y\in Y} \|u(y)-\sum_{\nu\in\Lambda} t_\nu y^\nu\|_X\le \varepsilon.
\ee
The space $X_n:={\rm span}\{t_\nu : \nu\in\Lambda\}$ provides the reduced model with $n=\#(\Lambda)$. In this case, an approximation of $u(y)$ in $X_n$ is readily provided by the function 
\be
\label{rm}
u(y) \approx \hat u(y):= \sum_{\nu\in\Lambda} t_\nu y^\nu,
\ee
in other words, we approximate $u$ using the terms $y^\nu$ as the coefficients of $\hat u$ in the basis $\{t_\nu\}_{\nu\in\Lambda}$. 
Moreover, the sets $\Lambda$ may be chosen as {\em lower sets},  which are defined by the property
\bes
\hbox {if} \quad \nu\in \Lambda, \quad \hbox{then} \quad \mu\in\Lambda \quad \hbox{whenever} \quad \mu_j\le \nu_j, \quad j=1,2, \dots.
\ees
 
A second approach to finding a reduced model is to judiciously select certain {\it snapshots} $u(y^1),\dots, u(y^n) \in X$ of $u$, for instance via a greedy procedure, and use the space spanned by the snapshots as the reduced model, i.e., $X_n:=\mathrm{span}\{u(y^1),\dots, u(y^n)\}$. In this case, the approximation of $u(y)$ in $X_n$ is defined to be 
\begin{equation*}
u(y) \approx \tilde u(y):=P_{X_n}u(y),
\end{equation*}	
where $P_{X_n}u(y)$ denotes the orthogonal projection of $u(y)$ onto $X_n$. We will comment on a few of the advantages and drawbacks of these methods in Section \ref{sec:ellipticPDE} below.
 
\subsection{Nonlinear reduced models and library approximation} \label{subsec:nonlinear}

In many contexts, numerical methods based on {\em nonlinear} approximation perform better than their linear counterpart, in the sense that they achieved a prescribed accuracy with fewer degrees of freedom \cite{Dnonlinear}. This motivates us to consider replacing the linear reduced model $X_n$ by a nonlinear space $\Sigma_n$ depending on $n$ parameters. We call such a space $\Sigma_n$ a {\it nonlinear reduced model}. This idea has already been suggested and studied in certain settings; see e.g., \cite{bonito2020nonlinear, eftang2010, maday2013, zou2019adaptive}. 

The nonlinear reduced models studied in this paper can be placed into the form of what is sometimes called {\it library approximation}. Given a Banach space $X$, a library $\cL$ is a finite collection of affine spaces $L_1:=x_1+X_1,\dots,L_N:=x_N+X_N$, where each $X_j$ is a linear space of dimension at most $m$, and each $x_j\in X$, $j=1,\dots,N$. We set each $X_j=\{0\}$ in the case $m=0$. For an element $x\in X$, the error of approximation of $x$ by the library $\cL$ is
\be
\label{xerror}
E(x,\cL):=\inf_{L\in\cL} \dist(x,L)_X.
\ee
In other words, given $x$, we choose the best of the affine spaces $L_j=x_j+X_j$, $j=1,\ldots,N$, to approximate $x$. Given a library $\cL$ and a compact set $K\subset X$, we define the worst-case error
\be
\label{liberror}
E_\cL(K):=\sup_{x\in K} E(x,\cL).
\ee
For a given a class of functions $K$, the goal is then to build a library with $m$ small, in comparison to the dimension $n$ used in linear models $X_n$, while retaining the accuracy of the reduced model and keeping the dimension of the library $N$ moderate.

We denote by $\cL_{m,N}=\cL_{m,N}(X)$ the collection of all libraries $\cL = \{ L_1,\ldots,L_N\}$ containing $N$ affine spaces of dimension at most $m$. If we fix the values of $m$ and $N$, then the best performance of a library with these fixed values is
\be
\label{mNwidth}
d_{m,N}(K):=\inf_{\cL \in\ \cL_{m,N}}E_\cL(K).
\ee
We call $d_{m,N}$ the {\it library width} of $K$. This definition slightly differs from that introduced in \cite{Tem} in which the spaces $L_j$ are taken to be linear instead of affine.
 
Library widths include the two standard approximation concepts of widths and entropy. Recall that if $K$ is a compact set in a Banach space $X$, then its Kolmogorov $m$-width is
\be
\label{nwidth}
d_m(K):= d_m(K)_X:=\inf_{\dim(W)=m}\dist(K,W)_X,
\ee
where the infimum is taken over all linear spaces $W$ of dimension $m$. Thus the Kolmogorov $m$-width of $K$ is the smallest error that can be obtained by approximating $K$ with linear spaces of dimension $m$. It follows that we can bound the library width $d_{m,1}(K)_X$ between Kolmogorov widths by 
\be
d_{m+1}(K)\leq d_m(K_0)=d_{m,1}(K) \leq d_{m}(K),
\ee
where $K_0=K-x_0$ for some suitable $x_0\in X$. At the other extreme,
\be
d_{0,2^n}(K)=\varepsilon_n(K),
\ee
where $\varepsilon_n(K)$ is the $n$th entropy number of $K$: that is, the smallest number $\varepsilon$ such that $K$ can be covered by $2^n$ balls of radius $\varepsilon$ in $X$. 

\subsection{Application to Parametric PDEs} \label{sec:ellipticPDE}

As mentioned before, one of the main motivations in considering the approximation of high-dimensional Banach or Hilbert space-valued functions is the solution of parametric PDEs, which take the general form
\begin{equation} \label{abstract_PPDE}
\cP(u,y)=0,
\end{equation}
where $y$ ranges over some parameter domain $Y$ and $u=u(y)$ is the corresponding solution assumed to be uniquely defined in some Hilbert space $V$ for every $y\in Y$. We distinguish two frameworks in which such parametric PDEs arise. In optimal design or inverse problems, the goal is to minimize an objective function involving $u(y)$. In the uncertainty quantification setting, where the parameter vector $y$ is the realization of some random vector modeling the uncertainty in the system, we are interested in computing statistics of $u(y)$. Both settings require the solution of the parametric PDE \eqref{abstract_PPDE} for many different values of $y\in Y$, where again the rescaled parameter domain is assumed to be $Y=[-1,1]^{\mathbb N}$. To relate the following to the general discussion above, let $X=V$ and define $K=\cM:=\{u(y):\ y\in Y\}$ to be the solution manifold of the PDE.

There is a rigorous theory that quantifies the approximation performance of linear reduced models for parametric PDEs; see \cite{CD} for a summary of known results. The theory is most fully developed in the case of elliptic PDEs of the form 
\be 
\label{elliptic}
-\di(a\nabla u) = f,
\ee
set on a physical domain $D\subset \R^k$ (typically $k=1,2,3$), with, e.g.,  Dirichlet boundary conditions $u_{|\partial D}=0$, and where the diffusion function $a$ has an affine representation of the form
\begin{equation} 
\label{affine}
a(y)=\bar{a}+\sum_{j\geq 1}y_j\psi_j,
\end{equation}
for some given functions $\bar{a}$ and $(\psi_j)_{j\geq 1}$ in $L^{\infty}(D)$. These functions are assumed to satisfy the condition
\begin{equation} \label{hyp:UEA*}
\left\|\frac{\sum_{j\geq 1}|\psi_j|}{\bar{a}}\right\|_{L^{\infty}(D)}<1,
\end{equation}
which is equivalent to the following \emph{Uniform Ellipticity Assumption} (UEA): there exist $0<a_\mathrm{min} \leq a_\mathrm{max} <\infty$ such that
\begin{equation} \label{hyp:UEA}
 0< a_\mathrm{min} \leq a(y)\leq a_\mathrm{max} <\infty, \quad y\in Y.
\end{equation}
\vskip .1in
\noindent
Lax-Milgram theory then ensures that whenever $f\in V'=H^{-1}(D)$, for each $y\in Y$, the corresponding solution $u(y)$ is uniquely defined in the Hilbert space $V:=H_0^1(D)$ endowed with the norm $\|v\|_V:=\|\nabla v\|_{L^2(D)}$, $v\in V$.

Under these assumptions on the parametrized input data of the PDE, it is known that $u$ admits an analytic extension onto certain complex polydiscs or so-called filled-in Bernstein polyellipses that contain $Y$ (see \cite{CD}). In other words, $u$ has a certain anisotropic analyticity, dictated by the radius of the polydiscs or the length of the semi-axis of the polyellipses, respectively, and is therefore amenable to approximation by polynomials.

For this affine parametric model (and some related to it), recent results show that there is a numerical advantage in the Taylor coefficient approach to approximating $u$.
More specifically, it is sometimes possible to find a priori a suitable lower set $\Lambda$ by exploiting the parametric form of the diffusion coefficients~\cite{BCM}, and bounds on the cardinality of $\Lambda$ needed to reach a prescribed accuracy are available; see for instance \cite{CD,BDGJP2020}. This avoids computationally expensive search algorithms that are a component of greedy reduced basis selections.  Moreover, the (Galerkin) projection involved in greedy algorithms is not needed, saving the cost of solving a linear system with a dense $n\times n$ matrix.
On the other hand, greedy procedures have the advantage that they are provably near-optimal for finding a linear space to approximate $u$, in the sense that their convergence rates are similar to those of the optimal linear spaces for approximating $\cM$ \cite{BCDDPW}. Numerical experiments show that for a prescribed target accuracy, the greedy generated spaces that meet this accuracy are of significantly lower dimension then their polynomial counterparts~\cite{bonito2020nonlinear}.
However, as mentioned above, the (offline) computational costs to constructing a reduced basis via a greedy algorithm becomes prohibitive as the target accuracy $\varepsilon$ gets small (or $n$ is getting large); we refer to \cite{CD} for a detailed analysis. This high computational cost can be alleviated if one uses random training sets, see \cite{CDD}, in which case the error bounds are no longer certified but hold with high probability.

In the case of nonlinear reduced models for parametric PDEs, a library $\cL$ would then consist of affine spaces
\be
\label{affinespace}
L_i:=u_i+V_i,
\ee
where each $u_i\in V$ and each $V_i\subset V$ has dimension at most $m$. Then, the best approximation to $u(y)$ from $L_i$ is 
\be
\label{ba}
u_i +P_{V_i}(u(y)-u_i),
\ee
where $P_{V_i}$ is the $V$-orthogonal projection onto $V_i$.
In this context, when presented with a parameter $y$ for which we wish to compute an approximation to $u(y)$, the choice of which space $L_i$ to use from a given library $\cL$ could be decided in several ways. One possibility would be to find a computable upper bound for $\dist(u(y),L_i)_V=\|u-u_i-P_{V_i}(u(y)-u_i)\|_V$, namely a quantity only involving the projection $P_{V_i}(u(y)-u_i)$ and input data, and choose the value of $i$ that minimizes this surrogate quantity. This approach would however require the computation of the projection $P_{V_i}(u(y)-u_i)$ onto each space $V_i\in\cL$ which can be prohibitive when the cardinality of the library is large. Another procedure, and the one considered in this paper, involves building an a priori partition of the parameter domain $Y$ into cells $Q_i$, and constructing an affine space $L_i$ for each cell. Then the choice of $L_i$ for approximating $u(y)$ is determined by the cell $Q_i$ containing $y$.
One of the motivations for using library approximations with a small value of $m$ is to control the offline costs needed to construct each affine space $L_i$ in the collection. In addition, having a collection of low-dimensional spaces can also be beneficial in terms of online costs, as each query of the parameter-to-solution map $y\mapsto u(y)$ is computationally cheap. We also mention that keeping $m$ small is in fact required is some contexts, for instance when estimating the state from data observations. Indeed, in this setting the dimension of the reduced spaces are limited by the number of measurements of the state.

There are many parametric PDE problems outside of this theory for elliptic diffusion equations with coefficients of the form~\eqref{affine}, which nonetheless has certain anisotropic analyticity properties. Therefore, one goal of this paper is to prove results for nonlinear approximation in these cases. In the following sections, we do this by considering general approximation classes of anisotropic analytic functions, without regard for any specific parametric PDE setting.

\subsection{Outline} \label{sec:outline}
The rest of this paper is organized as follows. In Section \ref{sec:anisoclass} we introduce the model class of anisotropic analytic functions with Definition \ref{def:Brp}. With Theorem \ref{thm:global}, we then give a global error estimate for the (Taylor) polynomial approximation error in the $L^{\infty}(Y,X)$ norm, and discuss sufficient conditions for proving local error estimates. In Section \ref{sec:interp}, we introduce results from interpolation theory that are used to prove one of the local error estimate given in Section \ref{sec:pp_general}. In the latter, we give two upper bounds for the error in a subdomain $Q\subset Y$; see Theorems \ref{thm:local_v1} and \ref{thm:local_v2}. Although both estimates provide the same convergence rate with respect to the dimension of the (local) reduced space, the upper bound derived in Theorem \ref{thm:local_v1} requires that the sequence $\rho=(\rho_1,\rho_2,\ldots)$ characterizing the analytic anisotropy of the function to be approximated satisfies $(\rho_j^{-1})_{j\ge 1}\in\ell_1(\mathbb{N})$. This is a restrictive assumption that is not needed for the bound in Theorem \ref{thm:local_v2} which involves a modified sequence $\kappa=(\rho_1^{\theta},\rho_2^{\theta},\ldots)$ with $0<\theta\le 1$. Section \ref{sec:comparison} contains a comparison of the error estimates based on the original and modified sequences $\rho$ and $\kappa$. The local bound of Theorem \ref{thm:local_v2} is then used in Section \ref{sec:UB_size_library} to derive an upper bound on the dimension of the library needed to achieved a prescribed accuracy using spaces of a fixed dimension on each subdomain. Finally, concluding remarks are made in Section \ref{sec:conclusion}.

\subsection{Notation}
For sequences $a=(a_j)_{j\ge 1}$, $b=(b_j)_{j\ge 1}$ and $c=(c_j)_{j\ge 1}$, we will assume the following operations act elementwise and write
$$a\pm b = (a_1\pm b_1,a_2\pm b_2,\ldots), \quad |a|=(|a_1|,|a_2|,\ldots), \quad a+cb=(a_1+c_1b_1,a_2+c_2b_2,\ldots),$$
and
$$a^b=\prod_{j\ge 1}a_j^{b_j}, \quad a!=\prod_{j\ge 1}a_j!.$$
Moreover, for any multi-index set $\Lambda\subseteq\cF$ containing the zero sequence $0$, we write $\Lambda^*:=\Lambda\setminus\{0\}$.

\section{Anisotropic analyticity and approximation} \label{sec:anisoclass}

In this section, we recall the classes of anisotropic analytic functions introduced in \cite{BDGJP2020} and derive a (global) approximation error estimate. Throughout this paper  $\rho=(\rho_1,\rho_2,\dots)$ denotes a sequence of positive real numbers larger than one (i.e., $\rho_j>1$ for all $j\ge 1$) which satisfies $\lim_{j\to\infty}\rho_j=\infty$.
We recall the Banach space $\ell_\infty(\N)$ of all bounded {\it complex valued} sequences $(z_j)_{j\ge 1}$, with its usual norm $\|z\|_{\ell_\infty{(\N)}}:=\sup_{j\ge 1} |z_j|$, and let $\cU$ denote the unit ball of $\ell_\infty(\N)$.

In what follows, we are interested in representing a function $u \in L^\infty(Y,X)$ by a Taylor series expansion
\be
\label{TE}
u(y)=\sum_{\nu\in\cF} t_\nu y^\nu, \quad t_{\nu}\in X.
\ee
An important question is in which sense this Taylor series converges. We say that the convergence is {\it uniform unconditional} if any rearrangement of the terms in the series in \eqref{TE} converges uniformly in the space $X$. As noted in \cite[\S 3.1]{CD}, such convergence is ensured whenever $(\|t_\nu\|_X)_{\nu\in\cF}$ is in $\ell_1(\cF)$. Moreover, if $u$ satisfies a so-called \emph{truncation property}, then $u$ coincide with its Taylor series, see \cite[Proposition 2.1.5]{Zech} for details. The class of functions $\cB_{\rho,p}$ introduced in \cite{BDGJP2020} then consists of $X$-valued functions with convergent Taylor series that has certain anisotropy dictated by the sequence $\rho$. The precise definition is as follows.

\begin{definition} \label{def:Brp}
For any $0<p\le \infty$, we define the space $\cB_{\rho,p}$ as the set of all $u\in L^\infty(Y,X)$  which admit a representation 
\bes
\label{Brep}
u(y)=\sum_{\nu\in\cF} t_\nu y^\nu,\quad y\in Y,
\ees
with the convergence of the series uniform unconditional on $Y$, and where the $t_\nu=t_\nu(u)\in X$ are unique and satisfy
\be
\label{Brhopnorm_bounded}
\|u\|_{\cB_{\rho,p}}<\infty,
\ee
where
\be
\label{Brhopnorm}
\|u\|_{\cB_{\rho,p}}:= \left\{\begin{array}{ll}
\left( \sum_{\nu\in \cF} [\rho^\nu \|t_\nu\|_X]^p\right)^{1/p}= \| (\rho^\nu\|t_\nu\|_X)_{\nu\in\cF}\|_{\ell_p(\cF)} & \mbox{if } 0<p<\infty \\
& \\
\sup_{\nu\in\cF}\rho^\nu\|t_{\nu}\|_X & \mbox{if } p=\infty.
\end{array}
\right.
\ee
\end{definition}

Note that these classes get smaller as $p$ decreases, i.e., $\cB_{\rho,p}\subset \cB_{\rho,q}$ when $p\le q$.

\begin{remark}
We could define anisotropic spaces using other sequence norms in place of $\ell_p$ norms \eqref{Brhopnorm}, for instance Lorentz space norms. Moreover, different classes of analytic anisotropic functions could be defined replacing the Taylor basis $y^\nu$, $\nu\in\cF$, by other polynomial bases, a relevant example being a basis constituted of Legendre polynomials \cite{CCS}.
\end{remark} 

The goal of this work is to study nonlinear approximation for the model classes $\cB_{\rho,p}$, and more specifically to investigate the approximation of $u\in\cB_{\rho,p}$ by a library of $X$-valued piecewise (Taylor) polynomials. The general idea is the following: we fix a target accuracy $\varepsilon>0$ and an integer $m\ge 0$, and form a partition $ \{Q_i\}_{i=1}^N$ of $Y=[-1,1]^{\N}$ consisting of $N:=N(\varepsilon,m)$ subdomains $Q_i$. 
These $Q_i$ are chosen such that there is a $X$-valued polynomial of the form
\begin{equation*}
P_i(y)=\sum_{\nu\in\Lambda_i} t_{\nu,i}(y-\bar y^i)^\nu, \qquad \mbox{satisfying} \qquad \sup_{y\in Q_i}\|u(y)-P_i(y)\|_X\le \varepsilon,
\end{equation*}
where $\bar y^i\in Q_i$, $\Lambda_i$ is a (lower) set of cardinality $m+1$, and $t_{\nu,i}\in X$ for $\nu\in\Lambda_i$.
In other words, since $\Lambda_i$ is lower and thus contains the zero sequence, to each subdomain $Q_i$ we associate an affine space of the form \eqref{affinespace}
$$L_i:=t_{0,i} + X_i, \quad X_i:={\rm span}\{t_{\nu,i}: \,\, \nu\in\Lambda_i^*\}, \, \dim(X_i)=m.$$
For ease of both mathematical analysis and practical computation, we consider a tensor product partition of $Y$, yielding a covering made of hyperrectangles as defined in \eqref{defQ} below. This process, sketched in the proof of Theorem \ref{tcount_general_PDE}, is done as in our previous work \cite{bonito2020nonlinear}.

The goal is then to give an upper bound on the required number of subdomains, i.e., on the size of the library, and provide a recipe to built a suitable partition. The main ingredient for reaching that goal is the derivation of local error estimates, namely the estimation of the error in the $L^{\infty}(Q_i,X)$ norm between $u\in\cB_{\rho,p}$ and its truncated Taylor series about some $\bar y^i\in Q_i$. Unlike the PDE setting, such local error estimates cannot be deduced directly from a global error estimate using a simple shifting and scaling argument. Nonetheless, we first derive an error estimate on the whole parameter domain $Y$ for the error between $u$ and a truncated Taylor series about the origin.

\begin{theorem} \label{thm:global} 
Let $u\in \cB_{\rho,p}$ for some $1\le p\le \infty$, and let $p'$ denote the conjugate of $p$, i.e., $1/p+1/p'=1$. Assume that the sequence $\rho$ satisfies $\rho_j>1$ for all $j\ge 1$, and $(\rho_j^{-1})_{j\ge 1} \in \ell_q(\mathbb{N})$ for some $0< q < p'$. Then for any $m\geq0$, there is a lower set $\Lambda$ with $\#(\Lambda)=m+1$ such that the $X$-valued polynomial $P(y):=\sum_{\nu\in\Lambda}t_\nu y^\nu$, $t_{\nu}:=\partial_{\nu} u(0)/\nu!,$ satisfies
\begin{equation} \label{est:global_v2}
\|u-P\|_{L^{\infty}(Y,X)}\leq C(\rho,q)\|u\|_{\cB_{\rho,p}}\|(\rho_j^{-1})_{j\ge 1}\|_{\ell_q}(m+1)^{-r}, \quad r:=-1+\frac{1}{p}+\frac{1}{q}>0,
\end{equation}
where $C(\rho,q)>0$ depends only on $q$ and the sequence $\rho$.
\end{theorem}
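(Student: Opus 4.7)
The plan is to reduce the $L^\infty$ error to a tail sum of Taylor coefficient norms, apply H\"older's inequality to bring in the $\cB_{\rho,p}$ norm, and then use a Stechkin-type estimate on the residual scalar sequence $(\rho^{-\nu})_{\nu\in\cF}$ with a carefully chosen lower set. The first step is immediate from $|y^\nu|\le 1$ on $Y$:
\[
\|u-P\|_{L^\infty(Y,X)}=\Big\|\sum_{\nu\notin\Lambda} t_\nu y^\nu\Big\|_{L^\infty(Y,X)}\le \sum_{\nu\notin\Lambda}\|t_\nu\|_X.
\]
Writing $\|t_\nu\|_X=\rho^{-\nu}\cdot(\rho^\nu\|t_\nu\|_X)$ and applying H\"older's inequality with exponents $p$ and $p'$ (with the usual interpretation when $p\in\{1,\infty\}$) yields
\[
\sum_{\nu\notin\Lambda}\|t_\nu\|_X\le \Big(\sum_{\nu\notin\Lambda}\rho^{-\nu p'}\Big)^{1/p'}\|u\|_{\cB_{\rho,p}}.
\]
The problem thus reduces to controlling the $\ell_{p'}$-tail of $(\rho^{-\nu})_{\nu\in\cF}$ outside a lower set of cardinality $m+1$.

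The central observation is that $(\rho^{-\nu})_{\nu\in\cF}$ is \emph{componentwise monotone}: since $\rho_j>1$, whenever $\mu\le\nu$ componentwise one has $\rho^{-\mu}\ge\rho^{-\nu}$. Therefore, defining $\Lambda$ as the set of the $m+1$ indices at which $\rho^{-\nu}$ attains its largest values (breaking ties if necessary so that every $\mu\le\nu\in\Lambda$ is included) automatically produces a lower set, and it realizes the best $(m+1)$-term approximation of $(\rho^{-\nu})_{\nu\in\cF}$ in any $\ell_t$ norm. Moreover, the assumption $(\rho_j^{-1})\in\ell_q(\N)$ transfers to the multi-index sequence via
\[
\sum_{\nu\in\cF}\rho^{-\nu q}=\prod_{j\ge 1}\frac{1}{1-\rho_j^{-q}}<\infty,
\]
so that $(\rho^{-\nu})_{\nu\in\cF}\in\ell_q(\cF)$.

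With these pieces in hand, Stechkin's lemma applied to $(\rho^{-\nu})\in\ell_q(\cF)$ with target exponent $t=p'>q$ gives
\[
\Big(\sum_{\nu\notin\Lambda}\rho^{-\nu p'}\Big)^{1/p'}\le (m+1)^{-(1/q-1/p')}\,\|(\rho^{-\nu})\|_{\ell_q(\cF)},
\]
and $1/q-1/p'=1/q+1/p-1=r$ matches the exponent in the statement. Chaining the inequalities completes the estimate up to the constant.

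The final piece is to rewrite the $\ell_q(\cF)$ norm of $(\rho^{-\nu})$ in the form $C(\rho,q)\|(\rho_j^{-1})\|_{\ell_q(\N)}$ appearing in the statement. Using the infinite product identity above and the elementary bound $e^x-1\le x\,e^x$ applied to $x=\sum_j \log(1-\rho_j^{-q})^{-1}\lesssim \sum_j \rho_j^{-q}/(1-\rho_j^{-q})$, one can isolate a single factor of $\|(\rho_j^{-1})\|_{\ell_q(\N)}$ and absorb all remaining $\rho$-dependence into a constant $C(\rho,q)$ (note that $\rho_j\to\infty$ ensures the suprema $\sup_j(1-\rho_j^{-q})^{-1}$ are finite). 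I expect the main obstacle to be precisely this last bookkeeping step: producing the stated factorization of the constant requires a bit of care with the infinite product, whereas the monotonicity argument making the greedy set a lower set and the Stechkin estimate are standard.
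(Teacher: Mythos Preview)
Your proposal is correct and follows essentially the same approach as the paper: H\"older to extract $\|u\|_{\cB_{\rho,p}}$, then a Stechkin-type tail bound on $(\rho^{-\nu})$ over the greedy lower set, then reduction of the multi-index $\ell_q$ norm to $\|(\rho_j^{-1})\|_{\ell_q}$. The only cosmetic differences are that the paper reproves Stechkin's inequality inline (splitting $\rho^{-\nu p'}=\rho^{-\nu(p'-q)}\rho^{-\nu q}$ and bounding the sup via the non-increasing rearrangement, working with $\cF^*$ rather than $\cF$) and obtains the explicit constant $C(\rho,q)$ by invoking a cited result rather than your direct manipulation of the infinite product.
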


\begin{proof}
We start by showing that there exists a polynomial $P$ with $m+1$ terms such that
\begin{equation} \label{est:global}
\|u-P\|_{L^{\infty}(Y,X)}\leq \|u\|_{\cB_{\rho,p}}\|(\rho^{-\nu})_{\nu\in\cF^*}\|_{\ell_q}(m+1)^{-r}.
\end{equation}
Let $\Lambda$ be a lower set of indices $\nu \in \cF$ corresponding to the $m+1$ largest terms $\rho^{-\nu}$. Such a set, which may not be unique, can be obtained by a proper handling of the possible ties in the $\rho^{-\nu}$; see \cite{BDGJP2020}.
Recalling that $p'$ denote the conjugate of $p$, using the H\"older inequality we have for any $y\in Y$
\begin{equation} \label{step1}
\|u(y)-P(y)\|_X\leq \sum_{\nu\notin\Lambda}\|t_{\nu}\|_X=\sum_{\nu\notin\Lambda}\|t_{\nu}\|_X\rho^{\nu}\rho^{-\nu}\leq \|u\|_{\cB_{\rho,p}}\left[\sum_{\nu\notin\Lambda}\rho^{-\nu p'}\right]^{\frac{1}{p'}}.
\end{equation}
Moreover, we easily get
\begin{equation} \label{step2}
\sum_{\nu\notin\Lambda}\rho^{-\nu p'}=\sum_{\nu\notin\Lambda}\rho^{-\nu (p'-q)}\rho^{-\nu q}\leq \left[\sup_{\nu\notin\Lambda}\rho^{-\nu (p'-q)}\right]\sum_{\nu\notin\Lambda}\rho^{-\nu q}.
\end{equation}
We now let $(\gamma_k)_{k\geq1}$ be a non-increasing rearrangement of the sequence $(\rho^{-\nu})_{\nu\in\cF}$. We note that $\gamma_1$ will always correspond to $\rho^{-0}$ due to the fact that $\rho_j > 1$ for all $j\ge 1$. Then we have
\begin{equation}
\sup_{\nu\notin\Lambda}\rho^{-\nu q} = \gamma^q_{m+2} \leq (m+1)^{-1} \sum_{k=2}^{m+2} \gamma_k^q \leq (m+1)^{-1} \sum_{k\geq 2} \gamma_k^q = (m+1)^{-1} \sum_{\nu \neq 0} \rho^{-q\nu}
\end{equation}
which implies
\begin{equation} \label{step3}
\sup_{\nu\notin\Lambda}\rho^{-\nu}\leq (m+1)^{-\frac{1}{q}}\|(\rho^{-\nu})_{\nu\in\cF^*}\|_{\ell_q}.
\end{equation}	
Inserting \eqref{step3} in \eqref{step2} and then in \eqref{step1} we get
\begin{eqnarray*}
\|u(y)-P(y)\|_X & \leq & \|u\|_{\cB_{\rho,p}}\left[\left\{(m+1)^{-\frac{1}{q}}\|(\rho^{-\nu})_{\nu\in\cF^*}\|_{\ell_q}\right\}^{p'-q}\right]^{\frac{1}{p'}}\left[\sum_{\nu\notin\Lambda}\rho^{-\nu q}\right]^{\frac{1}{p'}} \\
	& \leq & \|u\|_{\cB_{\rho,p}}(m+1)^{-\frac{p'-q}{qp'}}\left(\sum_{\nu\neq 0}\rho^{-\nu q}\right)^{\frac{p'-q}{qp'}}\left(\sum_{\nu\neq 0}\rho^{-\nu q}\right)^{\frac{1}{p'}} \\
	& = & \|u\|_{\cB_{\rho,p}}(m+1)^{-\frac{1}{q}+\frac{1}{p'}}\|(\rho^{-\nu})_{\nu\in\cF^*}\|_{\ell_q}.
\end{eqnarray*}
To conclude the proof, it remains to bound the $\ell_q$ norm of $(\rho^{-\nu})_{\nu\in\cF^*}$ by that of $(\rho_j^{-1})_{j\ge 1}$. Thanks to	\cite[Theorem 3.1]{bonito2020nonlinear} we have	
\begin{equation} \label{def:cst_equiv_norms}
\|(\rho^{-\nu})_{\nu\in\cF^*}\|_{\ell_q} \le C(\rho,q) \|(\rho_j^{-1})_{j\ge 1}\|_{\ell_q},
\end{equation}
where
\begin{equation}\label{def:Cdeltaq}
C(\rho,q) := \beta^{\frac{1}{q}}\exp\big(\frac{\beta}{q}\|(\rho_j^{-1})_{j\ge 1}\|_{\ell_q}^q\big), \qquad \beta:=-\ln(1-\rho_{\min}^{-q})\rho_{\min}^q, \quad \rho_{\min} :=\min_{j\ge 1}\rho_j>1,
\end{equation}
and the proof is complete.
\end{proof}

\begin{remark}
The error estimate \eqref{est:global} is a straightforward extension of \cite[Equation (3.13)]{bonito2020nonlinear} to the general case $1\le p \le \infty$, and its proof is given here for convenience. In \cite[Equation (3.13)]{bonito2020nonlinear}, which corresponds to the case $p=2$, the constant $C_{\delta}$ is an upper bound for $\|u\|_{\cB_{\rho,2}}$ obtained using the fact that $u$ is the solution to the elliptic parametric PDE \eqref{elliptic}; see \cite{BCM} for a proof of this \emph{a priori} estimate.
\end{remark}

\begin{remark}
Note that we could have considered the upper bound \eqref{est:global} with the norm $\|(\rho^{-\nu})_{\nu\in\cF}\|_{\ell_q}$ instead of $\|(\rho^{-\nu})_{\nu\in\cF^*}\|_{\ell_q}$, namely without excluding the zero sequence, in which case $(m+1)^{-r}$ can be replaced by $(m+2)^{-r}$. The reason for having $\cF^{*}$ is to be able to bound $\|(\rho^{-\nu})_{\nu\in\cF^*}\|_{\ell_q}$ by $\|(\rho_j^{-1})_{j\ge 1}\|_{\ell_q}$ which is needed for the tensor product partition of $Y$ considered in this work. Indeed, the cover of the parameter domain $Y$ will be obtained as in \cite{bonito2020nonlinear} by partitioning some of the directions $j=1,2,\ldots$, namely the ones contributing the most to the error. This information is encoded in the sequence $(\rho_j)_{j\ge 1}$, which controls the anisotropy of $u\in\cB_{\rho,p}$: a larger $\rho_j$ (and thus a smaller $\rho_j^{-1}$) indicates a smaller influence of the variable $y_j$ on $u$.
\end{remark}

As mentioned above, our library consists of local Taylor approximations and we want to compute an upper bound on $N(\varepsilon,m)$, the cardinality of the library for a given a target accuracy $\varepsilon$ and a dimension $m$. To do this, we derive a local version of \eqref{est:global_v2} in Theorem~\ref{thm:global}, i.e., we show a similar error bound for the Taylor series coefficients around an arbitrary point $\bar y \in Y := [-1,1]^\N$. The general idea is this: assume again that we are in the setting of Theorem \ref{thm:global} above, with $(\rho_j^{-1})_{j\ge 1}\in \ell_q(\mathbb{N})$ and $\rho_j>1$ for $j\ge 1$. Suppose then that $Q_\lambda(\bar y) \subset Y$ is a hyperrectangle centered at $\bar y$ with side-lengths $2\lambda_j$, $j\ge 1$, i.e.
\be
\label{defQ}
Q_\lambda(\bar y):=\{y\in \mathbb{R}^{\mathbb{N}}: \quad |y_j- \bar y_j|\leq \lambda_j \quad \forall j\geq 1\},
\ee
where $\lambda_j\leq 1-|\bar y_j|$ so that $Q_\lambda(\bar y) \subset Y$. We would then like to show that for any $Q:=Q_{\lambda}(\bar y)$ as in \eqref{defQ} and any $m\ge 0$, there is a polynomial $P_Q$ with $m+1$ terms such that
\be \label{convcor1}
\sup_{y\in Q}\|u(y)-P_Q(y)\|_X\le C \|(\tilde \rho_j^{-1})_{j\ge 1}\|_{\ell_q}(m+1)^{-r}, \quad r = -1 + \frac{1}{q} + \frac{1}{p},
\ee
where $C = C(u,\rho, p, \bar y)$ is some positive constant and
\be
\label{def:rhotilde}
\tilde \rho_j := \frac{\rho_j-|\bar y_j|}{\lambda_j}, \quad j\ge 1.
\ee
Once the local error estimate \eqref{convcor1} is established, then for any given center $\bar y\in Y$ and integer $m\ge 0$, we can choose a sequence $\lambda=(\lambda_j)_{j\ge 1}$ so that the truncated Taylor series of $u$ about $y=\bar y$ with $m+1$ terms achieves the target accuracy $\varepsilon$ on $Q_{\lambda}(\bar y)$. Indeed, the smaller $\lambda_j$ the larger $\tilde \rho_j$ and thus the smaller the upper bound \eqref{convcor1}.

If we define the function $v$ via $v(y) = u(\bar y + \lambda y)$, $y\in Y$, we then infer that the local error estimate \eqref{convcor1} is proved if we can show that $v$ satisfies the assumptions of Theorem~\ref{thm:global} with the sequence $\tilde{\rho}=(\tilde \rho_j)_{j\ge 1}$ defined in \eqref{def:rhotilde}. 
To see this, first note that
\be\label{tilderhobound}
	\tilde{\rho}_j = \frac{\rho_j - |\bar y_j|}{\lambda_j} = \rho_j \frac{1 - |\bar y_j|/\rho_j}{\lambda_j} \geq \rho_j \frac{1 - |\bar y_j|/\rho_j}{1 - |\bar y_j|} \geq \rho_j,
\ee
so that if $(\rho_j^{-1})_{j \geq1} \in \ell_q(\mathbb{N})$ then $(\tilde{\rho}_j^{-1})_{j \geq1} \in \ell_q(\mathbb{N})$ for the same value of $q$. We also have the relationship $\partial_\nu v(0) = \lambda^\nu \partial_\nu u(\bar y)$. Hence, if we want the Taylor series approximation of $u$ centered at $\bar y$ to achieve the same order of approximation $(m+1)^{-r}$ in $Q_\lambda(\bar y)$, we need to show
\be\label{goal}
	\sum_{\nu\in\cF} \left[ \left\| \frac{\partial_\nu v(0)}{\nu!} \right\|_X \tilde{\rho}^\nu \right]^p = \sum_{\nu\in\cF} \left[ \left\| \frac{\partial_\nu u(\bar y)}{\nu!} \right\|_X (\rho - |\bar y|)^\nu \right]^p < \infty.
\ee
Establishing~\eqref{goal}, which is used to prove the first local estimate (see Theorem~\ref{thm:local_v1}), is accomplished in \S\ref{sec:pp_general}.

When $u$ is the solution to an elliptic parametric PDE of the form \eqref{elliptic} with affine diffusion coefficient as in \eqref{affine}, we know assumptions under which the solution map $u$ satisfies the hypothesis of Theorem~\ref{thm:global} with $p=2$. Then because of the affine structure, the shifted and scaled function $v$ will satisfy a similar parametric PDE, and thus we may apply Theorem~\ref{thm:global}. Our previous work~\cite{bonito2020nonlinear} uses that parametric PDE theory to show that local error estimate holds. In the next section we prove that \eqref{goal} holds without assuming that $u$ is the affine parametric PDE solution map, namely when all we know about $u$ is that it belongs to $\cB_{\rho,p}$. To do so, we will apply results from the field of interpolation theory.

\section{Upper bound for a map between weighted sequences of Taylor coefficients} \label{sec:interp}

For an integer $1\le r\le \infty$, any positive sequence $\rho = (\rho_j)_{j\ge 1}$, and any $y\in Y$, we define the operator $T_{\rho,y}: \ell_{r}(\cF,X)\rightarrow \ell_{r}(\cF,X)$ as
\be\label{defTmultid}
	T_{\rho,y}: (v_\nu)_{\nu\in\cF} \mapsto \left( (\rho - |y|)^\nu \sum_{\mu\geq \nu} v_\mu \frac{\mu!}{(\mu-\nu)!\nu!} \rho^{-\mu} y^{\mu-\nu} \right)_{\nu\in\cF}.
\ee
For an analytic function $f:Y\rightarrow X$, this operator takes a weighted sequence of Taylor coefficients at $0$ to a weighted sequence of Taylor coefficients at $y$, i.e.,
\be \label{interpret_defTmultid}
	T_{\rho,y}: \left( \rho^\nu \frac{\partial_\nu f(0)}{\nu!} \right)_{\nu\in\cF} \mapsto \left( (\rho-|y|)^\nu \frac{\partial_\nu f(y)}{\nu!} \right)_{\nu\in\cF}.
\ee
Indeed, by definition of $T_{\rho,y}$ in \eqref{defTmultid}, the $\nu$-term of the mapped weighted sequence is
\begin{equation*}
(\rho-|y|)^{\nu}\sum_{\mu\ge\nu}\rho^{\mu}\frac{\partial_{\mu}f(0)}{\mu!}\frac{\mu!}{(\mu-\nu)!\nu!}\rho^{-\mu}y^{\mu-\nu} = (\rho-|y|)^{\nu}\frac{1}{\nu!}\sum_{\mu\ge\nu}y^{\mu-\nu}\frac{\partial_{\mu}f(0)}{(\mu-\nu)!}=(\rho-|y|)^{\nu}\frac{\partial_{\nu}f(y)}{\nu!},
\end{equation*}
where for the last equality we used the relation (obtained by a Taylor expansion)
\begin{equation*}
\partial_{\nu}f(y)=\sum_{\sigma\in\cF}\frac{\partial_{\sigma}(\partial_{\nu}f(0))}{\sigma!}y^{\sigma}\stackrel{\mu=\sigma+\nu}{=}\sum_{\mu\ge\nu}\frac{\partial_{\mu}f(0)}{(\mu-\nu)!}y^{\mu-\nu}.
\end{equation*}

The following theorem is the main result of this section and will be a key ingredient in the proof of the local error estimates given in the next section.
\begin{theorem}\label{interpolationmultid}
	Let $\rho=(\rho_j)_{j\ge 1}$ be a sequence such that $\rho_j>1$ for all $j\ge 1$, and let $y\in Y$. If $(|y_j|\rho_j^{-1})_{j\ge 1}\in\ell_1(\mathbb{N})$ then for any $1\leq p \leq \infty$ we have
	\be \label{ellpoperatornorm}
	\| T_{\rho,y} \|_{\ell_p(\cF,X) \rightarrow \ell_p(\cF,X)} \leq \left( \prod_{j=1}^\infty \left(1 - \frac{|y_j|}{\rho_j}\right)^{-1} \right)^{1 - 1/p}.
	\ee
\end{theorem}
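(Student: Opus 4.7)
My plan is to prove the bound for the endpoints $p=1$ and $p=\infty$ by direct computation and then interpolate. In fact I prefer to present this as a single Schur-type argument using Hölder's inequality, so the proof works uniformly for all $p\in[1,\infty]$.

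First I would abbreviate the nonnegative kernel
\[
a_{\nu,\mu} := (\rho-|y|)^\nu\,\binom{\mu}{\nu}\,\rho^{-\mu}\,|y|^{\mu-\nu}\qquad (\mu\ge\nu),
\]
with $a_{\nu,\mu}=0$ otherwise, so that the pointwise bound $\|(T_{\rho,y}v)_\nu\|_X\le\sum_\mu a_{\nu,\mu}\|v_\mu\|_X$ is immediate from the triangle inequality in the definition \eqref{defTmultid}. The whole proof then reduces to computing the two "marginals" of this kernel.

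For the column sums, I would substitute $\sigma=\mu-\nu$, factor the multi-index product over coordinates, and apply the scalar identity $\sum_{k\ge 0}\binom{k+n}{n}x^k=(1-x)^{-(n+1)}$ coordinatewise, obtaining
\[
\sum_\mu a_{\nu,\mu} \;=\; (\rho-|y|)^\nu \rho^{-\nu}\prod_{j\ge 1}\bigl(1-|y_j|/\rho_j\bigr)^{-(\nu_j+1)} \;=\; \prod_{j\ge 1}\bigl(1-|y_j|/\rho_j\bigr)^{-1} \;=:\; A.
\]
The hypothesis $(|y_j|\rho_j^{-1})_{j\ge 1}\in\ell_1(\mathbb N)$ is exactly what guarantees $A<\infty$, and it also ensures that only finitely many factors above differ from $1$ for a given $\nu\in\cF$, so the manipulations are rigorous. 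For the row sums, I apply the multi-index binomial theorem coordinatewise:
\[
\sum_\nu a_{\nu,\mu} \;=\; \rho^{-\mu}\sum_{\nu\le\mu}\binom{\mu}{\nu}(\rho-|y|)^\nu |y|^{\mu-\nu} \;=\; \rho^{-\mu}\bigl((\rho-|y|)+|y|\bigr)^\mu \;=\; 1.
\]

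With the marginals $A$ and $1$ in hand, I would finish by the standard Schur-type argument. For $1<p<\infty$ with conjugate $p'$, split $a_{\nu,\mu}=a_{\nu,\mu}^{1/p'}a_{\nu,\mu}^{1/p}$ and apply Hölder to get
\[
\Bigl(\sum_\mu a_{\nu,\mu}\|v_\mu\|_X\Bigr)^p \;\le\; A^{p/p'}\sum_\mu a_{\nu,\mu}\|v_\mu\|_X^p;
\]
summing over $\nu$ and swapping the order of summation (permitted by Tonelli) then gives
\[
\sum_\nu\|(T_{\rho,y}v)_\nu\|_X^p \;\le\; A^{p/p'}\sum_\mu\|v_\mu\|_X^p\sum_\nu a_{\nu,\mu} \;=\; A^{p-1}\|v\|_{\ell_p(\cF,X)}^p,
\]
which is exactly \eqref{ellpoperatornorm} since $p-1=p(1-1/p)$. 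The cases $p=1$ and $p=\infty$ follow directly from the marginals, and for $p=\infty$ one simply pulls $\|v\|_{\ell_\infty}$ out and uses $\sum_\mu a_{\nu,\mu}=A$.

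The main obstacle I anticipate is purely bookkeeping: verifying the two multi-index identities above and justifying the interchange of infinite sums/products on $\cF$, since $\mu,\nu$ range over countably supported sequences. The $\ell_1$ assumption on $(|y_j|/\rho_j)$ is the key that makes every infinite product and double sum absolutely convergent, so all formal calculations become rigorous. The algebraic heart of the argument is just the observation that the kernel $a_{\nu,\mu}$ is a product of scalar kernels, each of which can be summed in closed form.
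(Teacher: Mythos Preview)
Your proof is correct and rests on the same core computation as the paper's: both arguments bound $\|(T_{\rho,y}v)_\nu\|_X$ by $\sum_\mu a_{\nu,\mu}\|v_\mu\|_X$ and then evaluate the two marginals of the nonnegative kernel, obtaining $\sum_{\nu}a_{\nu,\mu}=1$ via the binomial theorem and $\sum_{\mu}a_{\nu,\mu}=A$ via the negative binomial series. The only difference is in how the marginals are combined. The paper reads off the endpoint bounds $M_1=1$ and $M_\infty=A$ and then invokes the Riesz--Thorin theorem, whereas you run the Schur-type H\"older splitting $a_{\nu,\mu}=a_{\nu,\mu}^{1/p'}a_{\nu,\mu}^{1/p}$ directly. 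For a nonnegative kernel these are well-known equivalent routes to the same bound; your version is slightly more self-contained since it avoids citing an external interpolation theorem.

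One small imprecision: the hypothesis $(|y_j|/\rho_j)_{j\ge 1}\in\ell_1(\N)$ does \emph{not} force only finitely many factors of the product to differ from $1$ (that would require $y_j=0$ for all but finitely many $j$, which is not assumed). What it guarantees is the absolute convergence of the infinite product $A=\prod_{j\ge 1}(1-|y_j|/\rho_j)^{-1}$; together with Tonelli for nonnegative terms this is exactly what justifies factoring the sum over $\sigma\in\cF$ into a product over $j$. So the conclusion stands, only the stated reason needs adjusting.
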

\begin{proof}	
We first show that there exist two positive constants $M_1,M_{\infty}$ such that the map $T_{\rho,y}$ defined in~\eqref{defTmultid} satisfies
\be \label{boundT}
\| T_{\rho,y} \|_{\ell_r(\cF,X) \rightarrow \ell_r(\cF,X)} \leq M_r, \qquad r=1,\infty.
\ee
Starting with the case $r=1$, we let $v=(v_{\nu})_{\nu\in\cF} \in \ell_1(\cF,X)$. From the definition of $T_{\rho,y}$ and the triangle inequality, we see that
\begin{align*}
	\| T_{\rho,y}v \|_{\ell_1(\cF,X)} & = \sum_{\nu\in\cF} \| (T_{\rho,y}v)_\nu \|_X \\
		& = \sum_{\nu\in\cF} \left\| \sum_{\mu\geq\nu} v_\mu \frac{\mu!}{(\mu - \nu)!\nu!} y^{\mu-\nu}\rho^{-\mu}(\rho-|y|)^\nu \right\|_X \\
		& \leq \sum_{\nu\in\cF} \sum_{\mu\geq\nu} \|v_\mu \|_X \frac{\mu!}{(\mu - \nu)!\nu!} |y|^{\mu-\nu}\rho^{-\mu}(\rho-|y|)^\nu.
\end{align*}
The latter series may be reordered as
\bes
	\sum_{\nu\in\cF} \sum_{\mu\geq\nu} \|v_\mu \|_X \frac{\mu!}{(\mu - \nu)!\nu!} |y|^{\mu-\nu}\rho^{-\mu}(\rho-|y|)^\nu
		= \sum_{\mu\in\cF} \| v_\mu \|_X \rho^{-\mu} \left( \sum_{\nu\leq \mu} \frac{\mu!}{(\mu - \nu)!\nu!} |y|^{\mu-\nu}(\rho-|y|)^\nu \right).
\ees
By the binomial theorem, the inner sum is equal to $\rho^\mu$, so putting together the previous string of inequalities yields
\begin{equation}\label{eqn:operator_M1}
	 \| T_{\rho,y}v \|_{\ell_1(\cF,X)} \leq \sum_{\mu\in\cF} \| v_\mu \|_X = \| v \|_{\ell_1(\cF,X)}.
\end{equation}
This gives \eqref{boundT} for $r=1$ with constant $M_1=1$. We proceed in a similar manner for the case $r=\infty$. Letting $v=(v_{\nu})_{\nu\in\cF} \in \ell_\infty(\cF,X)$, we estimate
\begin{align*}
	\| T_{\rho,y}v \|_{\ell_\infty(\cF,X)} & = \sup_{\nu\in\cF} \| (T_{\rho,y}v)_\nu \|_X \\
		& \leq \sup_{\nu\in\cF} \sum_{\mu \geq \nu} \| v_\mu \|_X \frac{\mu!}{(\mu - \nu)!\nu!} |y|^{\mu-\nu}\rho^{-\mu}(\rho-|y|)^\nu \\
		& \leq \sup_{\mu\in\cF}\left\{ \| v_\mu \|_X \rho^\mu \right\} \sup_{\nu\in\cF} \left\{ \sum_{\mu\geq\nu} \frac{\mu!}{(\mu - \nu)!\nu!} \prod_{j=1}^\infty \left( \frac{|y_j|}{\rho_j}\right)^{\mu_j-\nu_j}\left(1-\frac{|y_j|}{\rho_j}\right)^{\nu_j} \right\}.
\end{align*}
Since $(|y_j|\rho_j^{-1} )_{j\ge 1} \in \ell_1(\N)$ by assumption, the series on the last line is equal to
\be
\prod_{j=1}^\infty \left(1 - \frac{|y_j|}{\rho_j}\right)^{-1} =: M_{\infty} <\infty,
\ee
which gives in the case $r=\infty$. Finally, \eqref{ellpoperatornorm} is obtained from \eqref{boundT} by applying the Riesz-Thorin theorem \cite{Bergh}:
\be
\| T_{\rho,y} \|_{\ell_p(\cF,X) \rightarrow \ell_p(\cF,X)} \leq M_1^{1-\theta}M_{\infty}^{\theta}, \qquad \theta=1-\frac{1}{p}.
\ee
\end{proof}

\begin{remark} \label{rem:hyp_l1}
	The assumption $(|y_j|\rho_j^{-1} )_{j\ge 1} \in \ell_1(\N)$ in Theorem~\ref{interpolationmultid} is not needed if we are only interested in the case $p=1$; see \eqref{eqn:operator_M1}.
\end{remark}

\section{Piecewise polynomial approximation for anisotropic analytic functions} \label{sec:pp_general}

We are now ready to derive a local version of Theorem \ref{thm:global}, i.e., to prove an approximation result of the form \eqref{convcor1} on subdomains of $Y$. Throughout this section, let $Q:=Q_{\lambda}(\bar y)$ be as in \eqref{defQ}, with center $\bar y\in Y$ and side-length vector $\lambda$ satisfying  $\lambda_j\leq 1-|\bar y_j|$ so that $Q \subset Y$.
A first local error estimate for the Taylor series at $\bar y$ in $Q$ is given in the following theorem.

\begin{theorem} \label{thm:local_v1}
Let $u\in \cB_{\rho,p}$ for some $1\le p\le \infty$, and assume that the sequence $\rho$ satisfies $\rho_j>1$ for all $j\ge 1$, and $(\rho_j^{-1})_{j\ge 1} \in \ell_q(\mathbb{N})$ for some $0< q \leq 1$. Then for any $m\geq0$, there is a set $\Lambda$ with $\#(\Lambda)=m+1$ such that the $X$-valued polynomial $P_{Q}(y):=\sum_{\nu\in\Lambda}\frac{\partial_\nu u(\bar y)}{\nu!} (y-\bar y)^\nu$ satisfies
\begin{equation} \label{est:local_v1}
\|u-P_{Q}\|_{L^{\infty}(Q,X)}\leq C(\rho,q)\left(\prod_{j=1}^{\infty}\left(1-\frac{|\bar y_j|}{\rho_j}\right)^{-1}\right)^{1-\frac{1}{p}}\|u\|_{\cB_{\rho,p}}\|(\tilde \rho_j^{-1})_{j\ge 1}\|_{\ell_q}(m+1)^{-r}, \quad r:=-1+\frac{1}{p}+\frac{1}{q},
\end{equation}
where
\begin{equation} \label{def:rho_tilde}
\tilde \rho_j := \frac{\rho_j-|\bar y_j|}{\lambda_j}, \quad j\ge 1,
\end{equation}
and $C(\rho,q)$ is the positive constant defined in \eqref{def:Cdeltaq}.
\end{theorem}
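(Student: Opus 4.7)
The plan is to reduce the local estimate on $Q$ to the global estimate of Theorem \ref{thm:global} via the change of variables outlined between \eqref{tilderhobound} and \eqref{goal}: set $v(y):=u(\bar y + \lambda y)$ for $y\in Y$, so that $\|u-P_Q\|_{L^\infty(Q,X)}=\|v-\tilde P\|_{L^\infty(Y,X)}$ for the rescaled polynomial $\tilde P(y):=P_Q(\bar y + \lambda y)$. Using $\partial_\nu v(0)=\lambda^\nu\partial_\nu u(\bar y)$, the target \eqref{goal} says exactly that $v$ lies in $\cB_{\tilde\rho,p}$, with $\tilde\rho$ given by \eqref{def:rho_tilde}. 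If I can establish this membership together with a quantitative bound on $\|v\|_{\cB_{\tilde\rho,p}}$, then applying Theorem~\ref{thm:global} to $v$ with the sequence $\tilde\rho$ will immediately produce the polynomial $P$ and the $(m+1)^{-r}$ rate.

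The key step, which I expect to be the main technical obstacle, is controlling $\|v\|_{\cB_{\tilde\rho,p}}$ in terms of $\|u\|_{\cB_{\rho,p}}$. The identification in \eqref{interpret_defTmultid} shows that the weighted Taylor sequence of $v$ at $0$, namely $\bigl(\tilde\rho^\nu\,\partial_\nu v(0)/\nu!\bigr)_{\nu\in\cF}=\bigl((\rho-|\bar y|)^\nu\,\partial_\nu u(\bar y)/\nu!\bigr)_{\nu\in\cF}$, is precisely the image under $T_{\rho,\bar y}$ of the weighted Taylor sequence $\bigl(\rho^\nu\,\partial_\nu u(0)/\nu!\bigr)_{\nu\in\cF}$ whose $\ell_p(\cF,X)$ norm equals $\|u\|_{\cB_{\rho,p}}$. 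The hypothesis of Theorem~\ref{interpolationmultid} is satisfied because $|\bar y_j|\le 1$ and $(\rho_j^{-1})_{j\ge 1}\in\ell_q\subset\ell_1$ (as $q\le 1$). Applying that theorem therefore yields
\begin{equation*}
\|v\|_{\cB_{\tilde\rho,p}}\le \biggl(\prod_{j=1}^\infty\Bigl(1-\tfrac{|\bar y_j|}{\rho_j}\Bigr)^{-1}\biggr)^{1-1/p}\|u\|_{\cB_{\rho,p}},
\end{equation*}
which is exactly the geometric factor appearing in \eqref{est:local_v1}.

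To close the argument I verify that $\tilde\rho$ meets the hypotheses of Theorem~\ref{thm:global}: the inequality \eqref{tilderhobound} gives both $\tilde\rho_j\ge \rho_j>1$ and $(\tilde\rho_j^{-1})_{j\ge 1}\in\ell_q$. Theorem~\ref{thm:global} then furnishes a lower set $\Lambda$ of cardinality $m+1$ and an $X$-valued polynomial $P$ such that
\begin{equation*}
\|v-P\|_{L^\infty(Y,X)}\le C(\tilde\rho,q)\,\|v\|_{\cB_{\tilde\rho,p}}\,\|(\tilde\rho_j^{-1})_{j\ge 1}\|_{\ell_q}(m+1)^{-r},
\end{equation*}
and unravelling the change of variables identifies this $P$ with $P_Q$ built from $\{\partial_\nu u(\bar y)/\nu!\}_{\nu\in\Lambda}$. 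Finally I replace $C(\tilde\rho,q)$ by $C(\rho,q)$ via a short monotonicity check on the formula \eqref{def:Cdeltaq}: since $\tilde\rho_j\ge \rho_j$ one has $\|(\tilde\rho_j^{-1})\|_{\ell_q}\le\|(\rho_j^{-1})\|_{\ell_q}$ and $\tilde\rho_{\min}\ge\rho_{\min}$, and the map $x\mapsto -x^q\ln(1-x^{-q})$, rewritten as $u\mapsto -\ln(1-u)/u$ with $u=x^{-q}$, is decreasing in $x>1$ (its Taylor expansion $1+u/2+u^2/3+\cdots$ being increasing in $u$), hence $\tilde\beta\le\beta$ and consequently $C(\tilde\rho,q)\le C(\rho,q)$. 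Combining these bounds gives \eqref{est:local_v1} after rescaling back to $Q$.
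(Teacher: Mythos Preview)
Your proposal is correct and follows essentially the same route as the paper's proof: define $v(y)=u(\bar y+\lambda y)$, use Theorem~\ref{interpolationmultid} (with the $\ell_q\subset\ell_1$ inclusion from $q\le 1$) to bound $\|v\|_{\cB_{\tilde\rho,p}}$ by the geometric product times $\|u\|_{\cB_{\rho,p}}$, apply Theorem~\ref{thm:global} with $\tilde\rho$, undo the change of variables, and finally use monotonicity of $\beta$ and of the $\ell_q$ norm to pass from $C(\tilde\rho,q)$ to $C(\rho,q)$. Your Taylor-expansion justification of the monotonicity of $x\mapsto -x^q\ln(1-x^{-q})$ is the same observation the paper makes when it asserts that $\beta(x)=-x\ln(1-x^{-1})$ is non-increasing on $(1,\infty)$.
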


\begin{proof}
As before, we define the function $v(y) := u(\bar y + \lambda y)$, $y\in Y$. We will prove this theorem by showing that $v$ satisfies the assumptions of Theorem~\ref{thm:global} with the sequence $\tilde{\rho}$. 

Recall from~\eqref{tilderhobound} that $\tilde{\rho}_j \geq \rho_j,$ $j\geq1$, so that if $(\rho_j^{-1})_{j \geq1} \in \ell_q(\mathbb{N})$ then $(\tilde{\rho}_j^{-1})_{j \geq1} \in \ell_q(\mathbb{N})$ for the same value of $q$. We also have the relationship $\partial_\nu v(0) = \lambda^\nu \partial_\nu u(\bar y)$. Hence, if we want to achieve the order of approximation $(m+1)^{-r}$ by the Taylor series approximation in $Q$, we need to show that $v \in \cB_{\tilde \rho,p}$. Recalling the definition of the map $T_{\rho,\bar y}$, see in particular \eqref{interpret_defTmultid}, we have
\begin{eqnarray*}
	\sum_{\nu\in\cF} \left[ \left\| \frac{\partial_\nu v(0)}{\nu!} \right\|_X \tilde{\rho}^\nu \right]^p & = & \sum_{\nu\in\cF} \left[ \left\| \frac{\partial_\nu u(\bar y)}{\nu!} \right\|_X (\rho - |\bar y|)^\nu \right]^p \\
	& = & \|T_{\rho,\bar y}(\rho^{\nu}t_{\nu})_{\nu\in\cF}\|_{\ell_p(\cF,X)}^p \\
	& \leq & \| T_{\rho,\bar y} \|_{\ell_p(\cF,X) \rightarrow \ell_p(\cF,X)}^p \| u \|_{\cB_{\rho,p}}^p,
\end{eqnarray*}
where $t_{\nu}=\partial_{\nu}u(0)/\nu!$.

Hence, thanks to Theorem~\ref{interpolationmultid} we obtain
\begin{equation} \label{eqn:u_utilde_v1}
	\| v\|_{\cB_{\tilde \rho,p}} \leq  \left(\prod_{j=1}^{\infty}\left(1-\frac{|\bar y_j|}{\rho_j}\right)^{-1}\right)^{1-\frac{1}{p}}\|u\|_{\cB_{\rho,p}}.
\end{equation}
The right-hand side of~\eqref{eqn:u_utilde_v1} is finite since we have assumed $u\in \cB_{\rho,p}$ and 
\begin{equation*}
(\rho_j^{-1})_{j\ge 1} \in \ell_q(\mathbb{N}) \subseteq \ell_1(\mathbb{N}) \quad \Longrightarrow \quad (|y_j|\rho_j^{-1})_{j\ge 1} \in \ell_1(\mathbb{N}).
\end{equation*}
Thus $v \in \cB_{\tilde \rho,p}$, and by Theorem~\ref{thm:global}, there is a set $\Lambda$ with $m+1$ terms, such that the polynomial $P(y):=\sum_{\nu\in\Lambda}t_\nu(v) y^\nu$, $t_{\nu}(v)=\partial_{\nu}v(0)/\nu!$, satisfies
\begin{equation*}
 	\|v- P\|_{L^{\infty}(Y,X)} \leq C(\tilde\rho,q)\|v\|_{\cB_{\tilde \rho,p}} \|(\tilde \rho_j^{-1})_{j\ge 1}\|_{\ell_q}(m+1)^{-r},
\end{equation*}
where $C(\tilde\rho,q)$ is defined as in \eqref{def:Cdeltaq}. Defining $P_{Q}(y)$ by
\be
P_{Q}(y) = P((y-\bar y)/\lambda) = \sum_{\nu\in\Lambda} \frac{\partial_\nu u(\bar y)}{\nu!} (y - \bar y)^\nu,
\ee
it is easy to see that
\be \label{eqn:u_Q_v1}
\|u- P_{Q}\|_{L^{\infty}(Q,X)} \leq C(\tilde\rho,q)\|v\|_{\cB_{\tilde \rho,p}} \|(\tilde \rho_j^{-1})_{j\ge 1}\|_{\ell_q}(m+1)^{-r}.
\ee
Finally, using again that $\tilde\rho_j\ge\rho_j$ for all $j\ge 1$, and the fact that the function 
\begin{equation*}
\beta(x)=-\ln(1-x^{-1})x, \quad x\in(1,\infty),
\end{equation*}
is non-increasing, we have $C(\tilde \rho, q) \leq C(\rho, q)$. Using this with \eqref{eqn:u_utilde_v1} in \eqref{eqn:u_Q_v1} yields the desired inequality~\eqref{est:local_v1}.
\end{proof}

The drawback of Theorem \ref{thm:local_v1} is the strong assumption that $(\rho_j^{-1})_{j\ge 1}\in\ell_q(\mathbb{N})$ for some $q \leq 1$. We now show that we can get around this assumption by considering a slightly different sequence, namely $(\kappa_j)_{j\ge 1}:=(\rho_j^{\theta})_{j\ge 1}$ for some $0<\theta \le 1$, while maintaining the same convergence rate $(m+1)^{-r}$.

\begin{theorem} \label{thm:local_v2}
	Let $u\in \cB_{\rho,p}$ for some $1\le p\le \infty$, and assume the sequence $\rho$ satisfies $\rho_j>1$ for all $j\ge 1$, and $(\rho_j^{-1})_{j\ge 1} \in \ell_q(\mathbb{N})$ for some $0< q < \frac{p}{p-1}$. Let $(\kappa_j)_{j\ge 1}$ be the sequence defined by $\kappa_j:=\rho_j^{\theta}$, where $\theta:=1-\frac{q}{p'}$ with $p'$ the conjugate of $p$. Then $u\in\cB_{\kappa,1}$ and, for each $m\ge0$, there is a polynomial $P_{Q}$ with $m+1$ terms such that
	\begin{equation} \label{est:local_v2_bis}
	\|u-P_{Q}\|_{L^{\infty}(Q,X)}\leq C(\kappa,q_{\theta})\|u\|_{\cB_{\kappa,1}} \|(\tilde \kappa_j^{-1})_{j\ge 1}\|_{\ell_{q_{\theta}}}(m+1)^{-r}, \quad r:=-1+\frac{1}{p}+\frac{1}{q},
	\end{equation}
	where $q_{\theta} := \frac{q}{\theta}$ and
	\begin{equation} \label{def:kappa_tilde}
	\tilde \kappa_j := \frac{\kappa_j-|\bar y_j|}{\lambda_j}=\frac{\rho_j^{\theta}-|\bar y_j|}{\lambda_j}, \quad j\ge 1.
	\end{equation}
\end{theorem}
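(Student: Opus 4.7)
The plan is to reduce the claim to Theorem \ref{thm:global} applied at outer exponent $1$ in the space $\cB_{\tilde\kappa,1}$. Working at $p=1$ automatically removes both restrictions that forced $q\le 1$ in Theorem \ref{thm:local_v1}: the admissibility condition $q<p'$ of Theorem \ref{thm:global} becomes $q_\theta<\infty$, and Theorem \ref{interpolationmultid} needs no $\ell_1$ hypothesis on $(|\bar y_j|\kappa_j^{-1})$ by Remark \ref{rem:hyp_l1}. The argument is thus a three-step chain: embed $u$ into $\cB_{\kappa,1}$, transfer to $v(y):=u(\bar y+\lambda y)\in\cB_{\tilde\kappa,1}$, then apply Theorem \ref{thm:global}.

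First I would verify $u\in\cB_{\kappa,1}$ via H\"older's inequality with exponents $p$ and $p'$. Writing $\kappa^\nu=\rho^{\theta\nu}=\rho^\nu\rho^{(\theta-1)\nu}$ and using the identity $(\theta-1)p'=-q$,
\[
\|u\|_{\cB_{\kappa,1}}
=\sum_{\nu\in\cF}\rho^{\theta\nu}\|t_\nu\|_X
\le \|u\|_{\cB_{\rho,p}}\Bigl(\sum_{\nu\in\cF}\rho^{-q\nu}\Bigr)^{1/p'}
=\|u\|_{\cB_{\rho,p}}\Bigl(\prod_{j\ge 1}(1-\rho_j^{-q})^{-1}\Bigr)^{1/p'},
\]
which is finite because $(\rho_j^{-1})\in\ell_q$. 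The same identity $\sum_j\kappa_j^{-q_\theta}=\sum_j\rho_j^{-q}$ gives $(\kappa_j^{-1})\in\ell_{q_\theta}$, and since $\tilde\kappa_j\ge\kappa_j>1$ as in \eqref{tilderhobound}, also $(\tilde\kappa_j^{-1})\in\ell_{q_\theta}$. Next, Theorem \ref{interpolationmultid} applied to the sequence $\kappa$, center $\bar y$, and outer exponent $1$ (no $\ell_1$ hypothesis required, by Remark \ref{rem:hyp_l1}) yields
\[
\|v\|_{\cB_{\tilde\kappa,1}}\le \|T_{\kappa,\bar y}\|_{\ell_1(\cF,X)\to\ell_1(\cF,X)}\,\|u\|_{\cB_{\kappa,1}}\le \|u\|_{\cB_{\kappa,1}}.
\]
I would then invoke Theorem \ref{thm:global} on $v\in\cB_{\tilde\kappa,1}$ with outer exponent $1$ and sequence exponent $q_\theta$ (admissibility $q_\theta<\infty$ is automatic). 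The produced rate $-1+1/1+1/q_\theta=\theta/q=1/q-1/p'=-1+1/p+1/q=r$ matches the target, so there exist a lower set $\Lambda$ with $\#(\Lambda)=m+1$ and a polynomial $P(y)=\sum_{\nu\in\Lambda}t_\nu(v)y^\nu$ satisfying a bound on $\|v-P\|_{L^\infty(Y,X)}$ with constant $C(\tilde\kappa,q_\theta)$.

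Finally, setting $P_Q(y):=P((y-\bar y)/\lambda)$ converts the global bound on $v-P$ into the local bound on $u-P_Q$ over $Q=Q_\lambda(\bar y)$, and the monotonicity argument already used at the end of the proof of Theorem \ref{thm:local_v1} (namely $\tilde\kappa\ge\kappa$ combined with the non-increasing character of $x\mapsto -\ln(1-x^{-q_\theta})x^{q_\theta}$) upgrades $C(\tilde\kappa,q_\theta)$ to $C(\kappa,q_\theta)$, delivering the desired estimate~\eqref{est:local_v2_bis}. The only genuinely delicate point is the choice $\theta=1-q/p'$: it is the unique exponent that simultaneously makes the H\"older step converge (by forcing $(\theta-1)p'=-q$) and makes the reduced rate $1/q_\theta=\theta/q$ after applying Theorem \ref{thm:global} at $p=1$ coincide with $r=-1+1/p+1/q$. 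Everything else is bookkeeping built from results already available in the excerpt.
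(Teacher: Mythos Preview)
Your proposal is correct and follows essentially the same route as the paper: embed $\cB_{\rho,p}\hookrightarrow\cB_{\kappa,1}$ via H\"older with the identity $(\theta-1)p'=-q$, transfer to $v\in\cB_{\tilde\kappa,1}$ using Theorem~\ref{interpolationmultid} at $p=1$ (invoking Remark~\ref{rem:hyp_l1} to bypass the $\ell_1$ hypothesis), and then apply Theorem~\ref{thm:global} with the triple $(\kappa,1,q_\theta)$, checking that the resulting rate $1/q_\theta$ equals $r$. Your closing remark on why $\theta=1-q/p'$ is the unique choice making both the H\"older step and the rate match is a nice addition not spelled out in the paper.
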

Note that $\theta=1-\frac{q}{p'}$, defined in Theorem \ref{thm:local_v2}, indeed belongs to $(0,1)$ whenever $p>1$, which follows from the assumption $0<q<p'=\frac{p}{p-1}$, while $\theta=1$ if $p=1$.

\begin{proof}
We first show that $u\in\cB_{\rho,p}$ implies that $u\in\cB_{\kappa,1}$. This is immediate when $p=1$, since in this case we have $\theta=1$ and $\kappa=\rho$, and thus $\|u\|_{\cB_{\kappa,1}}=\|u\|_{\cB_{\rho,p}}$. When $1<p\le \infty$, this follows from the relation
\begin{equation} \label{rel:Bk1_Brp}
\|u\|_{\cB_{\kappa,1}}= \sum_{\nu\in\cF} \kappa^\nu \|t_\nu\|_{X}=\sum_{\nu\in\cF} \rho^\nu \|t_\nu\|_{X} \rho^{(\theta-1)\nu}
\leq \|u\|_{\cB_{\rho,p}}  \Big(\sum_{\nu\in\cF} \rho^{(\theta-1)p'\nu}\Big)^{\frac 1 {p'}},
\end{equation}
where the second sum converges since $(\theta-1)p'=-q$.
Moreover, it is clear that
\be
	\sum_{\nu\in\cF} \kappa^{-q_{\theta}\nu} = \sum_{\nu\in\cF} \rho^{-q\nu} < \infty,
\ee 
so that $(\kappa^{-\nu})_{\nu\in\cF}\in\ell_{q_{\theta}}(\cF)$. Then the rest of the proof follows that of Theorem~\ref{thm:local_v1} with $\rho$, $p$, and $q$ replaced by $\kappa$, $1$, and $q_{\theta}$, respectively. In this case equation~\eqref{eqn:u_utilde_v1} gives
\begin{equation*}
\|v \|_{\cB_{\tilde \kappa, 1}} \leq \| u \|_{\cB_{\kappa, 1}},
\end{equation*}
which does not require the assumption $(\kappa_j^{-1})_{j\ge 1}\in\ell_1(\mathbb{N})$; see Remark \ref{rem:hyp_l1}.
Moreover, the convergence rate $r$ is indeed as defined in \eqref{def:kappa_tilde} since
$$r=-1+\frac{1}{1}+\frac{1}{q_{\theta}}=-1+\frac{1}{p}+\frac{1}{q}.$$
\end{proof}

If we take $\bar y=(0,0,\ldots)$ and $\lambda=(1,1,\ldots)$, then Theorem \ref{thm:local_v2} yields a global error estimate on $Q=Y$ using the sequence $\kappa$ in lieu of $\rho$, i.e., instead of \eqref{est:global_v2} we have
\begin{equation} \label{est:global_kappa_v2}
\|u-P\|_{L^{\infty}(Y,X)}\leq C(\kappa,q_{\theta})\|u\|_{\cB_{\kappa,1}} \|(\kappa_j^{-1})_{j\ge 1}\|_{\ell_{q_{\theta}}}(m+1)^{-r}
\end{equation}
for some polynomial $P=P(y)$ with $m+1$ terms. 
We compare in the next section the global error estimates \eqref{est:global_v2} and \eqref{est:global_kappa_v2}.

\begin{remark}
According to the proof of Theorem~\ref{thm:global}, the bound \eqref{est:global} is achieved by $P(y):=\sum_{\nu\in\Lambda}t_{\nu}y^{\nu}$, where $\Lambda$ is a lower set with multi-indices corresponding to the $(m+1)$ largest $\rho^{-\nu}$. Then \eqref{est:global_kappa_v2} is achieved by the same polynomial since
\begin{equation*}
\rho^{-\nu} \ge \rho^{-\mu} \quad \Longrightarrow \quad \kappa^{-\nu}=(\rho^{-\nu})^{\theta} \ge (\rho^{-\mu})^{\theta}=\kappa^{-\mu};
\end{equation*}
in other words, the ordering for the $\kappa^{-\nu}$ is the same as the one for the $\rho^{-\nu}$, and the corresponding multi-index set $\Lambda$ is the same for both sequences (provided that ties in the size of the $\rho^{-\nu}$ are handled in the same way).
\end{remark}

\section{Comparison of the global error bounds} \label{sec:comparison}

We now turn to the comparison of the global error estimates using the sequence $\rho$ versus those using $\kappa$, namely of the upper bounds for the error in the $L^{\infty}(Y,X)$ norm given in \eqref{est:global_v2} and \eqref{est:global_kappa_v2}, respectively. For ease of reference, we briefly recall the bounds on $p$ and $q$, and a few relevant definitions:
\begin{equation*}
1\le p\le \infty, \quad 0<q<\frac{p}{p-1}, \quad \kappa_j=\rho_j^{\theta}, \quad \theta=1-\frac{q}{p'}\in(0,1], \quad p'=\frac{p}{p-1}.
\end{equation*}
Note that if $p=1$, then $\theta=1$ and thus the two estimates coincide. If $p>1$, then $\theta\in(0,1)$, and we will thus consider only this case from now on.

The two error estimates consist of four terms that we compare below.

\begin{itemize}
	\item $(m+1)^{-r}$: The convergence rate $r$ is the same in both error estimates, namely $r=-1+1/p+1/q$, and thus this term is the same in both error estimates.
	\item $\|(\rho_j^{-1})_{j\ge 1}\|_{\ell_q}$ versus $\|(\kappa_j^{-1})_{j\ge 1}\|_{\ell_{q_{\theta}}}$: We have
	\begin{equation} \label{rel:norms}
	\|(\kappa_j^{-1})_{j\ge 1}\|_{\ell_{q_{\theta}}} = \|(\rho_j^{-1})_{j\ge 1}\|_{\ell_q}^{\theta}.
	\end{equation}
	Now whether $\|(\kappa_j^{-1})_{j\ge 1}\|_{\ell_{q_{\theta}}}$ is smaller than $\|(\rho_j^{-1})_{j\ge 1}\|_{\ell_q}$ depends on whether $\|(\rho_j^{-1})_{j\ge 1}\|_{\ell_q}$ is larger than one or not. Indeed, since $\theta\in(0,1)$ we have
	\begin{equation} \label{rel:norms2}
	\|(\kappa_j^{-1})_{j\ge 1}\|_{\ell_{q_{\theta}}}\le \|(\rho_j^{-1})_{j\ge 1}\|_{\ell_q} \quad \mbox{if} \quad  \|(\rho_j^{-1})_{j\ge 1}\|_{\ell_q}\ge 1
	\end{equation}
	with reverse inequality otherwise.
	\item $C(\rho, q)$ versus $C(\kappa,q_{\theta})$: Recall that these constants are defined by~\eqref{def:Cdeltaq}. Since $\rho_j>1$ for all $j\ge 1$ and $\theta\in(0,1)$, we have $\kappa_j=\rho_j^{\theta}< \rho_j$. Moreover, $\kappa_{\min}=\rho_{\min}^{\theta}$ and thus the $\beta$ in $C(\rho,q)$ and $C(\kappa,q_{\theta})$ are the same:
	\begin{equation*}
	\beta:=-\ln\left(1-\rho_{\min}^{-q}\right)\rho_{\min}^q=-\ln\left(1-\kappa_{\min}^{-\frac{q}{\theta}}\right)\kappa_{\min}^\frac{q}{\theta}=-\ln\left(1-\kappa_{\min}^{-q_{\theta}}\right)\kappa_{\min}^{q_{\theta}}.
	\end{equation*}
	Therefore, we have
	\begin{equation*}
		C(\kappa,q_{\theta}) = \beta^{\frac{\theta}{q}}\exp\left(\frac{\beta\theta}{q}\|(\kappa_j^{-1})_{j\ge 1}\|_{\ell_{q_{\theta}}}^{\frac{q}{\theta}}\right) \stackrel{\eqref{rel:norms}}{=} \beta^{\frac{\theta}{q}}\exp\left(\frac{\beta\theta}{q}\|(\rho_j^{-1})_{j\ge 1}\|_{\ell_q}^q\right) = C(\rho,q)^{\theta}\le C(\rho,q),
	\end{equation*}\\
	where for the last inequality we used the fact that $C(\rho,q)\ge 1$ since $\beta\ge 1$. Indeed, we have
	$$\ln(1-x)=-x-\frac{x^2}{2}-\frac{x^3}{3}-\ldots = -\sum_{k=1}^{\infty}\frac{x^k}{k}$$
	and thus
	$$\beta=-\ln\left(1-\rho_{\min}^{-q}\right)\rho_{\min}^q=\sum_{k=1}^{\infty}\frac{\rho_{\min}^{(1-k)q}}{k}=1+\sum_{k=2}^{\infty}\frac{\rho_{\min}^{(1-k)q}}{k}\ge 1.$$
	\item $\|u\|_{\mathcal{B}_{\rho,p}}$ versus $\|u\|_{\mathcal{B}_{\kappa,1}}$: From \eqref{rel:Bk1_Brp}, we directly have that
	\begin{equation*}
	\|u\|_{\mathcal{B}_{\kappa,1}} \le K \|u\|_{\mathcal{B}_{\rho,p}}, \quad \mbox{where } K:=\Big(\sum_{\nu\in\cF} \rho^{(\theta-1)p'\nu}\Big)^{\frac 1 {p'}}\in(1,\infty).
	\end{equation*}
	Therefore, if $K\gg 1$ then $\|u\|_{\mathcal{B}_{\kappa,1}}$ could potentially be much larger than $\|u\|_{\mathcal{B}_{\rho,p}}$. Moreover, we do not have a reversed inequality of the form $k\|u\|_{\mathcal{B}_{\rho,p}}\le \|u\|_{\mathcal{B}_{\kappa,1}}$ for some constant $k>0$, as this would imply that the spaces $\cB_{\rho,p}$ and $\cB_{\kappa,1}$ are the same.
\end{itemize}

\section{Bounds on the library size} \label{sec:UB_size_library}

In this section, we use the local error upper bound \eqref{est:local_v2_bis} to build a library consisting of piecewise Taylor polynomials. Without loss of generality, we assume that the target accuracy $\varepsilon>0$ and the dimension $m\ge 0$ are such that
\begin{equation} \label{eqn:cstr_eps_m}
C\|(\kappa_j^{-1})_{j\ge 1}\|_{\ell_q}(m+1)^{-r}>\varepsilon, \quad C:=C(\kappa,q_{\theta})\|u\|_{\mathcal{B}_{\kappa,1}},
\end{equation}
where $C(\kappa,q_{\theta})$ is the constant defined in~\eqref{def:Cdeltaq}, $\kappa_j=\rho_j^{\theta}$ and $\theta=1-\frac{q}{p'}$ as in Theorem \ref{thm:local_v2}. Indeed, if \eqref{eqn:cstr_eps_m} does not hold, then there is no need to partition the parameter domain, since thanks to the global error estimate \eqref{est:global_kappa_v2} there is a Taylor series approximation centered at the origin with $m+1$ terms satisfying the prescribed accuracy $\varepsilon$.
 
\begin{theorem} \label{tcount_general_PDE}
Let $u\in\mathcal{B}_{\rho,p}$ for some $1\le p \le \infty$, and assume that the sequence $\rho$ is nondecreasing with $\rho_1>1$ and satisfies $(\rho^{-1}_j)_{j\geq 1}\in \ell_q(\mathbb{N})$ for some $0<q<\frac{p}{p-1}$. Let $q_{\theta}=q/\theta$ with $\theta=1-q/p'$, where $p'$ is the conjugate of $p$. Finally, let $\varepsilon>0$, $m\ge 0$ and assume that \eqref{eqn:cstr_eps_m} holds. 
Then, there exists a tensor product partition of $Y$ into a collection $\cR$ of $N:=N(\varepsilon,m)$ hyperrectangles such that on each $Q\in\cR$ there is a $X$-valued polynomial $P_Q$ with $m+1$ terms such that
\begin{equation}
\label{tcount1}
\|u(y)-P_Q(y)\|_X\le \varepsilon, \quad y\in Q.
\end{equation} 
Furthermore, if $J:=J(\varepsilon,m)\ge 1$ is the smallest integer such that
\begin{equation} \label{def:J}
\sum_{j\ge J+1}\rho_j^{-q} \leq \frac{1}{2}C^{-q_{\theta}}(m+1)^{q_{\theta} r}\varepsilon^{q_{\theta}},
\end{equation}
where $C$ is the constant in \eqref{eqn:cstr_eps_m}, and
\begin{equation}
\sigma:=\left(\frac{1}{2J}\right)^{\frac{1}{q_{\theta}}}C^{-1}(m+1)^r\varepsilon,
\end{equation}
then the partition is obtained by only subdividing in the first $J$ directions, and the number of cells $N$ in this partition satisfies
\begin{equation} \label{eqn:upper_bound_N}
N\le \prod_{j=1}^{ J} \left( \sigma^{-1} | \ln( 1 - \rho_j^{-\theta}) | + C({\sigma}) \right) \quad \mbox{for some}\quad C({\sigma})\in (1,2).
\end{equation}
\end{theorem}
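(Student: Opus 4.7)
My plan is to derive the partition through a direction-by-direction covering strategy analogous to that in \cite{bonito2020nonlinear}, but with the sequence $\kappa$ in place of $\rho$. The starting point is Theorem \ref{thm:local_v2}: on a hyperrectangle $Q = Q_\lambda(\bar y)$, the local Taylor polynomial $P_Q$ with $m+1$ terms satisfies $\|u - P_Q\|_{L^\infty(Q,X)} \le \varepsilon$ whenever
$$
\sum_{j\ge 1} \left( \frac{\lambda_j}{\rho_j^\theta - |\bar y_j|} \right)^{q_\theta} \le C^{-q_\theta}(m+1)^{q_\theta r}\varepsilon^{q_\theta},
$$
with $C$ as in \eqref{eqn:cstr_eps_m}. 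Thus it suffices to construct a tensor-product cover of $Y$ such that the geometric condition above is met by every cell, and then count the cells.

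To partition the budget, I split at the index $J = J(\varepsilon,m)$ defined by \eqref{def:J}. For coordinates $j > J$, no subdivision is performed, i.e., I take $\bar y_j = 0$ and $\lambda_j = 1$, so that the corresponding contribution is exactly $\sum_{j>J}\rho_j^{-q}$; by choice of $J$, this absorbs at most half of the total budget. For the first $J$ coordinates, I distribute the remaining half equally, asking that each cell satisfy
$$
\frac{\lambda_j}{\rho_j^\theta - |\bar y_j|} \le \sigma = (2J)^{-1/q_\theta}C^{-1}(m+1)^r\varepsilon, \qquad j = 1,\ldots,J,
$$
so that each head-term contributes at most $(2J)^{-1}C^{-q_\theta}(m+1)^{q_\theta r}\varepsilon^{q_\theta}$. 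The hypothesis \eqref{eqn:cstr_eps_m} ensures that $\sigma < 1$, so that the constraint is nontrivial and a genuine subdivision in each of the first $J$ directions is necessary.

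The main technical step, and the one I expect to be the real obstacle, is the one-dimensional covering count: for each $j \le J$, how many intervals $[\bar y - \lambda, \bar y + \lambda] \subset [-1,1]$ with $\lambda \le \sigma(\rho_j^\theta - |\bar y|)$ are needed to cover $[-1,1]$? I would proceed by a greedy sweep starting from the left endpoint: given a current left endpoint $a$, choose the largest admissible $\lambda$ so that $\bar y = a + \lambda$ satisfies the constraint (which, for $\bar y$ of a fixed sign, is an explicit linear equation in $\lambda$). Comparing the induced discrete recursion with its continuous counterpart $d\bar y / di = 2\sigma(\rho_j^\theta - |\bar y|)$ and integrating,
$$
\int_{-1}^{1} \frac{d\bar y}{2\sigma(\rho_j^\theta - |\bar y|)} = \sigma^{-1}\ln\!\left(\frac{\rho_j^\theta}{\rho_j^\theta - 1}\right) = \sigma^{-1}|\ln(1-\rho_j^{-\theta})|,
$$
which is the leading term of $N_j$. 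The boundary adjustments at $\pm 1$ and the standard ceiling correction from replacing the continuous count by an integer per half-interval produce the additive constant $C(\sigma) \in (1,2)$ appearing in \eqref{eqn:upper_bound_N}.

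Finally, since the construction is a tensor-product partition and only the first $J$ directions are genuinely subdivided, the total cell count factors as $N = \prod_{j=1}^{J} N_j$, which combined with the one-dimensional bound yields the claimed upper bound \eqref{eqn:upper_bound_N}. The bound \eqref{tcount1} holds on every cell by construction, completing the proof.
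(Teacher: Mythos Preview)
Your proposal is correct and follows essentially the same route as the paper: invoke Theorem~\ref{thm:local_v2}, split the budget $\eta=C^{-q_\theta}(m+1)^{q_\theta r}\varepsilon^{q_\theta}$ into a tail part (directions $j>J$, no subdivision) and a head part equidistributed over $j\le J$ via the constraint $\lambda_j\le\sigma(\rho_j^\theta-|\bar y_j|)$, then count the one-dimensional intervals and take the product. The only cosmetic difference is that the paper builds each one-dimensional partition symmetrically outward from the origin (so the construction naturally respects the absolute value in $\rho_j^\theta-|\bar y_j|$), whereas you sweep greedily from the left; both lead to the same integral $\sigma^{-1}|\ln(1-\rho_j^{-\theta})|$ for the leading count, and the paper, like you, defers the discrete-versus-continuous bookkeeping that produces the additive $C(\sigma)\in(1,2)$ to \cite{bonito2020nonlinear}.
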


\begin{proof}
This proof follows closely that of \cite[Theorem 3.5]{bonito2020nonlinear}. The main difference is that we will use the local error estimate \eqref{est:local_v2_bis} to build a tensor product partition of $Y$, i.e., we use the sequence $(\kappa_j)_{j\ge 1}=(\rho_j^{\theta})_{j\ge 1}$ instead of $(\rho_j)_{j\ge 1}$, and consider the $\ell_{q_\theta}(\mathbb{N})$ norm instead of the $\ell_q(\mathbb{N})$ norm. However, the strategy for partitioning $Y$ is similar, as is the proof of the bound~\eqref{eqn:upper_bound_N}, but we give the details of the partitioning here for the reader's convenience.
	
First, since $(\rho_j)_{j\ge 1}$ is nondecreasing by assumption, $(\kappa_j)_{j\ge 1}$ is likewise nondecreasing. According to \eqref{est:local_v2_bis}, for any hyperrectangle $Q$,  centered at $\bar y$ with side-length vector $(\lambda_j)_{j\ge 1}$, a sufficient condition to have $\|u-P_{Q}\|_{L^{\infty}(Q,X)}\leq\varepsilon$ is to have
\begin{equation} \label{def:eta}
\sum_{j\ge 1}\left(\frac{\rho_j^{\theta}-|\bar y_j|}{\lambda_j}\right)^{-q_{\theta}} = \sum_{j\ge 1}\tilde\kappa_j^{-q_{\theta}} \leq C^{-q_{\theta}}(m+1)^{rq_{\theta}}\varepsilon^{q_{\theta}}=:\eta,
\end{equation}
where $C$ is defined in \eqref{eqn:cstr_eps_m}, and $P_Q$ is a polynomial with $m+1$ terms. Note that if we set set $\bar y_j=0$ and $\lambda_j=1$ for $j\ge J+1$, where $J$ is given in \eqref{def:J}, the tail of the series in~\eqref{def:eta} contributes to \emph{half of the error}, namely
\begin{equation} \label{eqn:tail}
\sum_{j\ge J+1}\tilde\kappa_j^{-q_{\theta}}=\sum_{j\ge J+1}(\rho_j^{\theta})^{-q_{\theta}}=\sum_{j\ge J+1}\rho_j^{-q}\leq \frac{1}{2}\eta.
\end{equation}
Thus, the strategy is to subdivide only in the directions $j=1,\ldots,J$, and we do this so that the remaining error is equidistributed among the first $J$ directions. This is accomplished by requiring that the center $\bar y_j$ and the sidelength $\lambda_j$ of each subinterval satisfies
\begin{equation} \label{eqn:cond_kappa_tilde}
	\tilde\kappa_j =\frac{\rho_j^{\theta}-|\bar y_j|}{\lambda_j} = \sigma^{-1}, \quad 1\le j\le J,
\end{equation}
yielding
\begin{equation} \label{eqn:partitioned_dir}
	\sum_{j=1}^J\tilde\kappa_j^{-q_{\theta}}=\sum_{j=1}^J\sigma^{q_{\theta}}=\frac{1}{2}\eta.
\end{equation}
	
To define the tensor product grid, for each coordinate direction $y_j$, $j=1,\dots,J$, we first define how we subdivide the interval $[-1,1]$ into $(2k_j+1)$ subintervals
\begin{equation*}
	I_j^{i},\quad -k_j\le i\le k_j.
\end{equation*}
We do not subdivide any of the coordinate axis when $j>J$, i.e., $k_j=0$ and $I_j^0=[-1,1]$ when $j>J$. The partition is also chosen to be symmetric and so $I_j^{-i}= -I_j^i$, $i=1,\dots ,k_j$.
  
We fix $j\in\{1,\dots,J\}$ and describe the partition of $[-1,1]$ into intervals corresponding to the $j$-th coordinate. Our first interval $I_j^0$ is centered at $\bar y_j^0=0$ and according to \eqref{eqn:cond_kappa_tilde}, we set $\lambda_j^0=\sigma\rho_j^{\theta}$ provided this number is less than one. Otherwise, when $\sigma\rho_j^\theta\geq 1$, we define $\lambda_j^0:=1$, in which case $k_j=0$ and the partition consists only of the one interval $I_j^0=[-1,1]$. Note that since $(\rho_j^\theta)_{j\geq 1}$ is nondecreasing, when this occurs it also happens for all larger values of $j$.
 
As mentioned above, the partition is symmetric with respect to the origin and so we only describe the intervals to the right of the origin. The next interval $I_j^1$, with center $\bar y_j^1$ and sidelength $\lambda_j^1$, has left endpoint the same as the right endpoint of $I_j^0$, i.e., 
\begin{equation*}
	\bar y_j^1-\lambda_j^1 = \bar y_j^0+\lambda_j^0.
\end{equation*}
Now to satisfy \eqref{eqn:cond_kappa_tilde}, we choose
\begin{equation*}
	\lambda_j^1 = \sigma(\rho_j^{\theta}-\bar y_j^1) \quad \implies \quad  \lambda_j^1=\frac{\sigma}{1+\sigma}(\rho_j^{\theta}-\bar y_j^0-\lambda_j^0).
\end{equation*}
The only exception to this definition is when the right endpoint of this interval is larger than 1. Then we recenter the interval so its left endpoint is as before and its right endpoint is 1. In this case, we would stop the process and $k_j$ would be 1.

We continue in this same way moving to the right. In general, the interval $I_j^i$ will have its left endpoint equal to the right endpoint of $I_j^{i-1}$, with center $\bar y_j^i$ and sidelength $\lambda_j^i$ which satisfy
\begin{equation}
\label{satisfies}
\lambda_j^i=\sigma(\rho_j^\theta - \bar y_j^i).
\end{equation}
As before, we rescale in the case that such a choice would give a right endpoint larger than 1 and terminate the partitioning process. It follows that the interval $I_j^i$ always satisfies
\begin{equation}
\label{satisfies1}
\lambda_j^i\le \sigma(\rho_j^{\theta}-\bar y_j^i),\quad i=0,1,\dots,k_j,
\end{equation} 
with equality except for possibly the last interval $I_j^{k_j}$. This partitioning gives a tensor product set $\cR$ of hyperrectangles $Q$. 
 
To conclude the proof, it remains to derive a bound for the number of elements $\cR$ given by
\begin{equation*}
N= \prod_{j=1}^J(2k_j+1),
\end{equation*}
namely to show that when $k_j\neq 0, j=1,\ldots,J,$ we have
\begin{equation} \label{eqn:bound_nj}
2k_j+1 \le \left(\sigma^{-1}|\ln\left( 1 - \rho_j^{-\theta} \right)|+{C({\sigma})}\right).
\end{equation}
Since the proof is technical and follows the same arguments as in \cite[Theorem 3.5]{bonito2020nonlinear}, we do not provide the details here.
\end{proof} 

Note that although the error is distributed equally among all the directions $j=1,2,\ldots,J$, see \eqref{eqn:cond_kappa_tilde}, the partition obtained using the strategy described in the proof of Theorem~\ref{tcount_general_PDE} is anisotropic in general. Indeed, the number of subintervals $2k_j+1$ in direction $j$ is controlled by $\rho_j$ through the bound \eqref{eqn:bound_nj}.

\begin{remark}
	The upper bound \eqref{eqn:upper_bound_N} is similar to \cite[Equation (3.29)]{bonito2020nonlinear} which reads
	\begin{equation} \label{eqn:upper_bound_N_prev}
	N\le \prod_{j=1}^{\tilde J} \left( \tilde\sigma^{-1} | \ln( 1 - \rho_j^{-1}) | + C({\tilde \sigma}) \right) \quad \mbox{for some}\quad C({\tilde\sigma})\in (1,2);
	\end{equation}
	notice the factor $\rho_j^{-1}$ instead of $\rho_j^{-\theta}$ in \eqref{eqn:upper_bound_N}. In \eqref{eqn:upper_bound_N_prev}, $\tilde J$ is the smallest integer such that
	\begin{equation*}
	\sum_{j\ge J+1}\rho_j^{-1}\le \frac{1}{2}\tilde C^{-q}(m+1)^{qr}\varepsilon^q,
	\end{equation*}
	where $\tilde C=C(\rho,q)C_{\delta}$ with $C(\rho,q)$ as in \eqref{def:cst_equiv_norms} and $C_{\delta}$ an upper bound for $\|u\|_{\cB_{\rho,2}}$,
	and
	\begin{equation*}
	\tilde \sigma:=\left(\frac{1}{2J}\right)^{\frac{1}{q}}\tilde C^{-1}(m+1)^r\varepsilon.
	\end{equation*}
\end{remark}

\begin{remark}
We can obtain explicit upper bounds for $N$ when considering specific sequences $\rho$. For example, if we assume that $(\rho_j)_{j\ge 1}=(Mj^s)_{j\ge 1}$ for some fixed $M>1$ and $s>1/2$, then proceeding as in \cite[Section 3.3]{bonito2020nonlinear} we have $\sigma\sim J^{-s\theta}$, $|\ln(1-\rho_j^{-\theta})|\lesssim j^{-s\theta}$, and $J\sim \lambda^{\frac{q_{\theta}}{1-sq}}$. Therefore, inserting these relations in \eqref{eqn:upper_bound_N} we infer that
$$N\lesssim J^{s\theta J}(J!)^{-s\theta} \le e^{c\lambda^{\frac{q_{
\theta}}{1-sq}}}=e^{c\lambda^{\frac{q}{\theta(1-sq)}}}, \quad \lambda:=(m+1)^r\varepsilon,$$
for some constant $c>0$ independent of $q$.
\end{remark}

\section{Conclusion} \label{sec:conclusion}

In this article, we extend the nonlinear reduced model introduced in \cite{bonito2020nonlinear} to approximate more general high-dimensional functions, namely to approximate the class $\cB_{\rho,p}$ of \emph{anisotropic analytic} functions studied in \cite{BDGJP2020}. The nonlinear reduced model, which belongs to the category of library approximation, is obtained by partitioning the parameter domain and by using a different truncated Taylor series on each subdomain. In Theorem~\ref{tcount_general_PDE}, we give an upper bound on the number of subdomains (and thus on the size of the library) needed to achieve a prescribed accuracy while using a prescribed number of terms for each truncated Taylor series. The key ingredient needed to obtain such partition is a local error estimate quantifying, for any subdomain, the worst-case error between the exact solution and a local Taylor polynomial. Indeed, once a local error estimate is available, then the partition strategy proposed in \cite{bonito2020nonlinear} can be straightforwardly applied.

In the elliptic PDE setting of~\cite{bonito2020nonlinear}, a local error estimate can be obtained by a simple scaling and shifting argument. This is not possible in the general case considered here, where the only information we have on the function we want to approximate is that it belongs to the class $\cB_{\rho,p}$, and a more refined argument is needed. The main results of this work are thus Theorems \ref{thm:local_v1} and \ref{thm:local_v2} where we derive local error estimates. The upper bound obtained in Theorem~\ref{thm:local_v1} holds under the (strong) assumption that the sequence $\rho$ satisfy $(\rho_j^{-1})_{j\ge 1}\in\ell_1(\mathbb{N})$. This assumption is not needed for the bound of Theorem~\ref{thm:local_v2}, which involves a slightly different sequence than the original sequence $\rho$.

We conclude by mentioning that given a particular function $u\in\cB_{\rho,p}$, a number of terms $m+1$ and an error tolerance $\varepsilon$, the partition considered in the proof of Theorem \ref{tcount_general_PDE} is not optimal in the sense that it does not provide a library $\cL$ of minimal cardinality (with affine spaces of dimension $m$) such that $E_{\cL}(\cM)\le \varepsilon$ for $\cM=\{u(y): \, y\in Y\}$. In particular, we restrict the approximations to (i) piecewise Taylor polynomials and (ii) tensor-product based partitions of $Y$. There are thus opportunities for improvement, for instance by considering other strategies for partitioning the parameter domain as well as other types of spaces (such as piecewise Legendre polynomials or local reduced basis). Nonetheless, the proposed procedure for building a library given $u\in \cB_{\rho,p}$, $m$ and $\varepsilon$ is easy to implement and the library is computationally cheap to construct.

\section*{Acknowledgements}
The authors would like to thank Albert Cohen, Ron DeVore, Guergana Petrova, and Andrea Bonito for helpful discussions.

\bibliographystyle{siam}
\bibliography{bibliography}
 
\end{document}